\documentclass[preprint,12pt,3p]{elsarticle}
\usepackage{verbatim}
\usepackage{amsmath,amsthm,amsfonts,amssymb}
\usepackage{algorithm}
\usepackage{algpseudocode}
\usepackage{cases}
\usepackage{hyperref}

\usepackage[normalem]{ulem}
\usepackage{xcolor,sectsty}
\definecolor{astral}{RGB}{46,116,181}
\subsectionfont{\color{astral}}
\sectionfont{\color{astral}}
\linespread{1.2}

\newtheorem{theorem}{Theorem}[section]
\newtheorem{lemma}[theorem]{Lemma}

\newtheorem{definition}[theorem]{Definition}
\newtheorem{example}[theorem]{Example}

\newtheorem{remark}[theorem]{Remark}
\newtheorem{thm}{Theorem}[section]

\newtheorem{corollary}[thm]{Corollary}

\definecolor{darkslategray}{rgb}{0.18, 0.31, 0.31}
\definecolor{warmblack}{rgb}{0.0, 0.26, 0.26}

\journal{arXiv.org}

\newcommand{\R}{{\mathbb R}}
\newcommand{\C}{{\mathbb C}}

\newcommand{\mc}[1]{\mathcal {#1}}
\newcommand{\dg}{{\dagger}}
\newcommand{\n}{{*_N}}
\newcommand{\m}{{*_M}}
\newcommand{\kp}{{*_K}}
\newcommand{\lp}{{*_L}}
\newcommand{\1}{{*_1}}
\newcommand{\2}{{*_2}}

\begin{document}

\begin{frontmatter}

\title{ \textcolor{warmblack}{\bf An extension  of the Moore-Penrose Inverse of a Tensor via the Einstein product}}

\author{Krushnachandra Panigrahy$^\dag$$^a$, Debasisha Mishra$^\dag$$^b$}

\address{               $^{\dag}$Department of Mathematics,\\
                        National Institute of Technology Raipur,\\
                        Raipur, Chhattisgarh, India.\\
                        \textit{E-mail$^a$}: \texttt{kcp.224\symbol{'100}gmail.com }\\
                        \textit{E-mail$^b$}: \texttt{dmishra\symbol{'100}nitrr.ac.in. }
        }

\begin{abstract}
\textcolor{warmblack}{ In this paper, we first give an expression
for the Moore-Penrose inverse of the  product of two tensors via the
Einstein product. We then introduce a new generalized inverse of a
tensor called \textit{product Moore-Penrose inverse}. A necessary
and sufficient condition for the coincidence of the Moore-Penrose
inverse and the product Moore-Penrose inverse is also proposed.
Finally,  the \textit{triple reverse order law} of tensors  is
introduced.}
\end{abstract}

\begin{keyword}
Moore-Penrose inverse \sep Tensor \sep Einstein product \sep Reverse
order law.
\end{keyword}

\end{frontmatter}

\section{Introduction}\label{sec1}
Research on tensors has been very active recently \cite{bu1, bu2,
ding, klda, li, sh1, sh2, yw, yng} as tensors have many applications
in different fields like graph analysis, computer vision, signal
processing, data mining, and chemo-metrics, etc. (see  \cite{burmc,
cmn, cpr, eldn, hu, smilde, vla} and the references cited there in).
Let $\mathbb{C}^{I_{1}\times\cdots\times I_{N}}$  be the set of
order $N$ and dimension $I_1 \times \cdots \times I_N$ tensors over
the complex field $\mathbb{C}$.  $\mc{A} \in
\mathbb{C}^{I_{1}\times\cdots\times I_{N}}$ is a multiway array with
$N$-th order tensor, and $I_{1}, I_{2}, \cdots, I_{N}$ are
dimensions of the first, second,$\cdots$, $N$th way, respectively.
Each entry of $\mc{A}$ is denoted by $a_{i_{1}...i_{N}}$. Throughout
the paper, tensors are represented in calligraphic letters like
$\mc{A}$, and the notation $(\mc{A})_{i_{1}...i_N}=
a_{i_{1}...i_{N}}$ represents the scalars. Let $\mc{A}\in
{\C}^{I_{1}\times\cdots\times I_{M}\times J_{1}\times \cdots \times
J_{N}}$, then its {\it conjugate transpose}, denoted by
$\mc{A}^{H}$, is defined as $(\mc{A}^{H})_{j_{1}\hdots
j_{N}i_{1}\hdots i_{M}}=\overline{\mc{A}}_{i_{1}\hdots
i_{M}j_{1}\hdots j_{N}},$ where the over-line stands for the
conjugate of $\mc{A}_{i_{1}\hdots i_{M}j_{1}\hdots j_{N}}$. If the
tensor $\mc{A}$ is real, then its {\it transpose} is denoted by
$\mc{A}^{T}$, and is defined as $(\mc{A}^{T})_{j_{1}\hdots
j_{N}i_{1}\hdots i_{M}}=\mc{A}_{i_{1}\hdots i_{M}j_{1}\hdots
j_{N}}$. The Einstein product \cite{ein} $ \mc{A}\n\mc{B} \in
\mathbb{C}^{I_1\times\cdots\times I_N \times J_1 \times\cdots\times
J_M }$ of tensors $\mc{A} \in \mathbb{C}^{I_{1}\times\cdots\times
I_{N} \times K_{1} \times\cdots\times K_{N} }$ and $\mc{B} \in
\mathbb{C}^{K_{1}\times\cdots\times K_{N} \times J_{1}
\times\cdots\times J_{M} }$   is defined by the operation $\n$ via
\begin{equation*}\label{Eins}
(\mc{A}\n\mc{B})_{i_1...i_Nj_1...j_M}
=\displaystyle\sum_{k_1...k_N}a_{{i_1...i_N}{k_1...k_N}}b_{{k_1...k_N}{j_1...j_M}}.
\end{equation*}
The associative law of this tensor product holds. In the above
formula, if $\mc{B} \in \mathbb{C}^{K_1\times\cdots\times K_N}$,
then $\mc{A}\n\mc{B} \in \mathbb{C}^{I_1\times\cdots\times I_N}$ and
\begin{equation*}
(\mc{A}\n\mc{B})_{i_1...i_N} = \displaystyle\sum_{k_1...k_N}
a_{{i_1...i_N}{k_1...k_N}}b_{{k_1...k_N}}.
\end{equation*}
This product is  used in the study of the theory of relativity
\cite{ein} and in the area of continuum mechanics \cite{lai}. The
Einstein product $\1$  reduces to the standard matrix multiplication
as
$$(A\1B)_{ij}= \displaystyle\sum_{k=1}^{n} a_{ik}b_{kj},$$
for $A\in {\R}^{m \times n}$ and $B\in {\R}^{n \times l}$.

Brazell {\it et al.} \cite{BraliNT13} introduced the notion of the
ordinary tensor inverse,  as follows. A tensor $\mc{X} \in
\mathbb{C}^{I_1\times\cdots\times I_N \times I_1 \times\cdots\times
I_N}$ is called  the  {\it inverse} of $\mc{A}\in
\mathbb{C}^{I_1\times\cdots\times I_N \times I_1 \times\cdots\times
I_N}$  if it satisfies $\mc{A}\n\mc{X}=\mc{X}\n\mc{A}=\mc{I}$. It is
denoted  by $\mc{A}^{-1}$. Sun {\it et al.} \cite{sun} introduced
the Moore-Penrose  generalized inverse of a tensor and its
definition is recalled next.

\begin{definition}(Definition 2.2, \cite{sun})\label{defmpi}\\
Let $\mc{A} \in \mathbb{C}^{I_{1}\times\cdots\times I_{N} \times
J_{1} \times \cdots \times J_{M}}$. The tensor $\mc{X} \in
\mathbb{C}^{J_{1}\times\cdots\times J_{M} \times I_1
\times\cdots\times I_{N}}$ satisfying the following four tensor
equations:

\vspace{-1cm}

\begin{eqnarray}
\mc{A}\m\mc{X}\n\mc{A} &=& \mc{A};\\
\mc{X}\n\mc{A}\m\mc{X} &=& \mc{X};\\
(\mc{A}\m\mc{X})^{H} &=& \mc{A}\m\mc{X};\\
(\mc{X}\n\mc{A})^{H} &=& \mc{X}\n\mc{A},
\end{eqnarray}
is called  the \textbf{Moore-Penrose inverse} of $\mc{A}$, and is
denoted by $\mc{A}^{\dg}$.
\end{definition}
The authors of \cite{bm,  jiw1} further studied different
generalized inverses of tensors via the Einstein product.  Jin {\it
et al.} \cite{bai} again  introduced the Moore-Penrose inverse of  a
tensor using $t-$product. They showed the existence and uniqueness
of the Moore-Penrose inverse of an arbitrary tensor by using the
technique of fast Fourier transform, and discussed an application to
linear models.   Ji and Wei \cite{jiw1} introduced the weighted
Moore-Penrose inverse of an even-order tensor, and again  the
Drazin inverse of an even-order tensor \cite{jiw2}. They \cite{jiw2}
obtained an expression of  the Drazin inverse through the
core-nilpotent decomposition. Applications to find the Drazin
inverse solution of the singular linear tensor equation  $\mc{A} \n
\mc{X} = \mc{B}$ is also presented.  Many results on the generalized
inverses, the $\mc{X}$ which only satisfies some of the four
equations of the Definition \ref{defmpi}, can be found in \cite{bm,
kbm}. The vast work on  generalized inverses of a tensor and its
several multivariety extensions in different areas of mathematics in
the literature, motivate us to study further on theory of
generalized inverses of a tensor. In this paper, we introduce a new
type generalized inverse of tensor via the Einstein product called
\textit{product Moore-Penrose inverse} of tensor. This  is obtained
by extending the defining equations for the Moore-Penrose inverse of
a tensor. In addition, we present various expressions for the
Moore-Penrose inverse of products of tensors.

The paper is outlined as follows. In the next section, we discuss
some notations and definitions which are helpful in proving the main
results.  Section 3 contains the main results.

\section{Preliminaries}
For convenience, we first briefly explain some of the terminologies.
We  refer to ${\C}^{m \times n}$ as the  set of all complex ${m
\times n}$ matrices, where $\C$ denotes the set of complex scalars.
We denote ${\C}^{I_1\times\cdots\times I_N}$  as the set of order
$N$  complex tensors. Indeed, a matrix is a second order tensor, and
a vector is a first order tensor. A tensor $\mc{O}$ denotes the {\it
zero tensor} if  all the entries are zero. A tensor
$\mc{A}\in\mathbb{C}^{I_1\times\cdots\times I_N \times I_1
\times\cdots\times I_N}$ is {\it Hermitian}  if  $\mc{A}=\mc{A}^{H}$
and {\it skew-Hermitian} if $\mc{A}= - \mc{A}^{H}$. A tensor
$\mc{A}\in \mathbb{C}^{I_1\times\cdots\times I_N \times
I_1\times\cdots\times I_N}$  is  {\it idempotent}  if $\mc{A}\n
\mc{A}= \mc{A}.$ Next, we present a result for a Hermitian tensor
which is useful while proving our main results.
\begin{lemma}\label{ir}
If  $\mc{P}=\mc{P}^{H}$ for $\mc{P}\in {\C}^{I_{1}\times\cdots\times
I_{N}\times I_{1}\times \cdots \times I_{N}}$, then for any
$\mc{Q}\in {\C}^{I_{1}\times\cdots\times I_{N}\times I_{1}\times
\cdots \times I_{N}}$,
\begin{eqnarray}
    \mc{P}\n\mc{Q}=\mc{Q}&\text{ implies }& \mc{Q}^{\dg}\n\mc{P}=\mc{Q}^{\dg}.\label{ir1}\\
    \mc{Q}\n\mc{P}=\mc{Q}&\text{ implies }& \mc{P}\n\mc{Q}^{\dg}=\mc{Q}^{\dg}.\label{ir2}
\end{eqnarray}
\end{lemma}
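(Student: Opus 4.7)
The plan is to reduce both statements to the Moore-Penrose defining equations (1)--(4) by first moving the hypothesis on $\mc{P}$ to a statement involving $\mc{Q}^{H}$, and then sandwiching $\mc{P}$ between the standard identities $\mc{Q}^{\dg}=\mc{Q}^{\dg}\n\mc{Q}\n\mc{Q}^{\dg}$ and the Hermitian property of $\mc{Q}\n\mc{Q}^{\dg}$ (or $\mc{Q}^{\dg}\n\mc{Q}$). The two implications are structurally symmetric, so I would prove (\ref{ir1}) in detail and indicate that (\ref{ir2}) follows by the same manipulation performed on the right.

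For (\ref{ir1}), the first step is to take the conjugate transpose of $\mc{P}\n\mc{Q}=\mc{Q}$ and use $\mc{P}^{H}=\mc{P}$ to obtain the companion identity $\mc{Q}^{H}\n\mc{P}=\mc{Q}^{H}$. Next, I would start from $\mc{Q}^{\dg}\n\mc{P}$ and insert $\mc{Q}\n\mc{Q}^{\dg}$ on the left of $\mc{P}$ using (2), so that the expression becomes $\mc{Q}^{\dg}\n(\mc{Q}\n\mc{Q}^{\dg})\n\mc{P}$. Rewriting $\mc{Q}\n\mc{Q}^{\dg}$ as $(\mc{Q}\n\mc{Q}^{\dg})^{H}=(\mc{Q}^{\dg})^{H}\n\mc{Q}^{H}$ via (3) brings $\mc{Q}^{H}$ directly adjacent to $\mc{P}$, and the companion identity collapses $\mc{Q}^{H}\n\mc{P}$ back to $\mc{Q}^{H}$. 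Reassembling $(\mc{Q}^{\dg})^{H}\n\mc{Q}^{H}=\mc{Q}\n\mc{Q}^{\dg}$ and applying (2) once more yields $\mc{Q}^{\dg}$, finishing the argument.

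For (\ref{ir2}), I would mirror the above: take conjugate transposes to get $\mc{P}\n\mc{Q}^{H}=\mc{Q}^{H}$, then start from $\mc{P}\n\mc{Q}^{\dg}$ and insert $\mc{Q}^{\dg}\n\mc{Q}$ to its right using (2), rewrite that middle block via (4) as $\mc{Q}^{H}\n(\mc{Q}^{\dg})^{H}$, absorb $\mc{P}\n\mc{Q}^{H}$ to $\mc{Q}^{H}$, and close by reassembling and applying (2).

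The steps are essentially formal and rely only on associativity of $\n$ and on the Moore-Penrose axioms, so I do not anticipate a genuine obstacle; the only point requiring some care is bookkeeping of the index orders on $\mc{Q}$ and $\mc{Q}^{\dg}$ (both live in ${\C}^{I_{1}\times\cdots\times I_{N}\times I_{1}\times\cdots\times I_{N}}$, which guarantees that all the products above are compatible) and making sure the Hermitian switch is applied to the correct factor. Once that is verified, the chains of equalities above constitute the proof.
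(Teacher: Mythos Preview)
Your argument is correct, but it follows a different route from the paper's. The paper shows that $\mc{X}=\mc{Q}^{\dg}\n\mc{P}$ satisfies all four Moore--Penrose equations for $\mc{Q}$ (using $\mc{P}\n\mc{Q}=\mc{Q}$ to collapse the products and $\mc{P}^{H}=\mc{P}$ for the Hermitian check on $\mc{Q}\n\mc{X}$), and then invokes the uniqueness of $\mc{Q}^{\dg}$ to conclude $\mc{Q}^{\dg}\n\mc{P}=\mc{Q}^{\dg}$. You instead first derive the companion identity $\mc{Q}^{H}\n\mc{P}=\mc{Q}^{H}$ and then compute $\mc{Q}^{\dg}\n\mc{P}$ directly via the chain $\mc{Q}^{\dg}\n\mc{P}=\mc{Q}^{\dg}\n(\mc{Q}\n\mc{Q}^{\dg})\n\mc{P}=\mc{Q}^{\dg}\n(\mc{Q}^{\dg})^{H}\n\mc{Q}^{H}\n\mc{P}=\mc{Q}^{\dg}\n(\mc{Q}^{\dg})^{H}\n\mc{Q}^{H}=\mc{Q}^{\dg}$. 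Your approach is shorter and avoids the appeal to uniqueness; the paper's approach is more structural and makes explicit which of the Penrose axioms actually need the Hermitian hypothesis on $\mc{P}$. Both are valid, and your symmetry remark for (\ref{ir2}) is fine.
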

\begin{proof}
Suppose $\mc{P}=\mc{P}^{H}$, and for any $\mc{Q}$,
$\mc{P}\n\mc{Q}=\mc{Q}$. Then $\mc{P}\n\mc{Q}=\mc{Q}$ leads to
$\mc{Q}\n\mc{Q}^{\dg}\n\mc{P}\n\mc{Q}=\mc{Q}$,
$\mc{Q}^{\dg}\n\mc{P}\n\mc{Q}\n\mc{Q}^{\dg}\n\mc{P}=\mc{Q}^{\dg}\n\mc{P}$,
and
$(\mc{Q}^{\dg}\n\mc{P}\n\mc{Q})^{H}=\mc{Q}^{\dg}\n\mc{P}\n\mc{Q}$.
$\mc{P}=\mc{P}^{H}$ and $\mc{P}\n\mc{Q}=\mc{Q}$ leads
$(\mc{Q}\n\mc{Q}^{\dg}\n\mc{P})^{H}=\mc{Q}\n\mc{Q}^{\dg}\n\mc{P}$.
Thus, by Definition \ref{defmpi} first part of the lemma follows.

The other part of the lemma follows by a similar proof.
\end{proof}
The next result states that the Moore-Penrose inverse coincides with
the tensor itself, for a Hermitian and idempotent tensor $\mc{P}$.
\begin{lemma}
  If $\mc{P}\in{\C}^{I_{1}\times\cdots\times I_{N}\times I_{1}\times\cdots\times I_{N}}$ is a Hermitian idempotent tensor, then $\mc{P}^{\dg}=\mc{P}$.
\end{lemma}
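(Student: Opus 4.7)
The plan is to verify directly that $\mc{X}=\mc{P}$ satisfies the four defining equations of the Moore-Penrose inverse from Definition \ref{defmpi}, applied with $\mc{A}=\mc{P}$; the uniqueness of the Moore-Penrose inverse (implicit in the definition) will then force $\mc{P}^{\dg}=\mc{P}$. In this setting the order parameters $M$ and $N$ coincide, so every product involved is $\n$ and lives in the square space $\C^{I_{1}\times\cdots\times I_{N}\times I_{1}\times\cdots\times I_{N}}$.

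First, I would check equations (1) and (2): using idempotency, $\mc{P}\n\mc{P}\n\mc{P}=(\mc{P}\n\mc{P})\n\mc{P}=\mc{P}\n\mc{P}=\mc{P}$, which simultaneously establishes $\mc{A}\m\mc{X}\n\mc{A}=\mc{A}$ and $\mc{X}\n\mc{A}\m\mc{X}=\mc{X}$. Then for equations (3) and (4), I would observe that $\mc{P}\n\mc{P}=\mc{P}$, so $(\mc{P}\n\mc{P})^{H}=\mc{P}^{H}=\mc{P}=\mc{P}\n\mc{P}$, where the middle equality uses the Hermitian hypothesis. All four Penrose equations are thus satisfied with $\mc{X}=\mc{P}$.

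There is no real obstacle here: the proof is essentially a one-line verification, and the only subtlety is making sure the two associativity reductions $\mc{P}\n\mc{P}\n\mc{P}=\mc{P}$ are applied in both orderings so that equations (1) and (2) are separately addressed. One could alternatively invoke Lemma \ref{ir}: from $\mc{P}=\mc{P}^{H}$ and $\mc{P}\n\mc{P}=\mc{P}$ (taking $\mc{Q}=\mc{P}$) one gets $\mc{P}^{\dg}\n\mc{P}=\mc{P}^{\dg}$, but this route still requires an additional argument to conclude $\mc{P}^{\dg}=\mc{P}$, so the direct verification is cleaner and is the route I would take.
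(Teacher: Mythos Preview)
Your proof is correct. The paper actually states this lemma without proof, so there is no argument to compare against; your direct verification of the four Penrose equations using idempotency and Hermiticity is the standard route and exactly what the omission implicitly invites.
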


Suppose, both $\mc{X}_{1}$ and $\mc{X}_{2}$ are solutions of
$\mc{X}\n\mc{A}\m\mc{X}=\mc{X}$, $\mc{A}\m\mc{X}=\mc{Z}_{1}$, and
$\mc{X}\n\mc{A}=\mc{Z}_{2}$, where $\mc{Z}_{1}$ and $\mc{Z}_{2}$ are
given tensors, independent of $\mc{X}$. Then
$\mc{X}_{1}=\mc{X}_{1}\n\mc{A}\m\mc{X}_{1}$ together with
$\mc{A}\m\mc{X}_{1}=\mc{Z}_{1}=\mc{A}\m\mc{X}_{2}$ and
$\mc{X}_{1}\n\mc{A}=\mc{Z}_{2}=\mc{X}_{2}\n\mc{A}$ results
$\mc{X}_{1}=\mc{X}_{2}$. The following lemma represents this result.

\begin{lemma}\label{ul}
For every $\mc{A}\in{\C}^{I_{1}\times\cdots\times I_{N}\times
J_{1}\times\cdots\times J_{M}}$ there is a unique
$\mc{X}\in{\C}^{J_{1}\times\cdots\times J_{M}\times
I_{1}\times\cdots\times I_{N}}$ satisfying
$\mc{X}\n\mc{A}\m\mc{X}=\mc{X}$, $\mc{A}\m\mc{X}=\mc{Z}_{1}$, and
$\mc{X}\n\mc{A}=\mc{Z}_{2}$, where
$\mc{Z}_{1}\in{\C}^{I_{1}\times\cdots\times I_{N}\times
I_{1}\times\cdots\times I_{N}}$ and
$\mc{Z}_{2}\in{\C}^{J_{1}\times\cdots\times J_{M}\times
J_{1}\times\cdots\times J_{M}}$ are given tensors, independent of
$\mc{X}$.
\end{lemma}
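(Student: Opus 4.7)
The paragraph immediately preceding the lemma already sketches the argument, so my plan is essentially to formalize that sketch into a clean proof of uniqueness. The idea is to assume two solutions $\mc{X}_1,\mc{X}_2\in\C^{J_1\times\cdots\times J_M\times I_1\times\cdots\times I_N}$ of the three equations
$\mc{X}\n\mc{A}\m\mc{X}=\mc{X}$, $\mc{A}\m\mc{X}=\mc{Z}_1$, $\mc{X}\n\mc{A}=\mc{Z}_2$,
and show that $\mc{X}_1=\mc{X}_2$ by manipulating these identities using associativity of the Einstein product.

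Concretely, I would start from the first equation applied to $\mc{X}_1$, namely $\mc{X}_1=\mc{X}_1\n\mc{A}\m\mc{X}_1$, and replace $\mc{X}_1\n\mc{A}$ by $\mc{Z}_2$, which by hypothesis also equals $\mc{X}_2\n\mc{A}$. This gives
\begin{equation*}
\mc{X}_1=(\mc{X}_1\n\mc{A})\m\mc{X}_1=(\mc{X}_2\n\mc{A})\m\mc{X}_1=\mc{X}_2\n(\mc{A}\m\mc{X}_1),
\end{equation*}
where the last step is where associativity plays its role. Then I would substitute $\mc{A}\m\mc{X}_1=\mc{Z}_1=\mc{A}\m\mc{X}_2$ into the right-hand side to obtain
\begin{equation*}
\mc{X}_1=\mc{X}_2\n(\mc{A}\m\mc{X}_2)=\mc{X}_2\n\mc{A}\m\mc{X}_2=\mc{X}_2,
\end{equation*}
the last equality coming again from the idempotency-type equation $\mc{X}_2\n\mc{A}\m\mc{X}_2=\mc{X}_2$. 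This closes the chain.

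There is no real obstacle here; the main point to watch is dimensional bookkeeping, making sure the right Einstein product ($\n$ versus $\m$) is used at each juncture so that associativity is legitimately invoked. Implicit in the statement of the lemma is the assumption that a solution exists for the given $\mc{Z}_1,\mc{Z}_2$ (this is not automatic for arbitrary choices, as e.g.\ $\mc{A}=\mc{O}$ forces $\mc{Z}_1=\mc{O}$); what the lemma asserts, and what the argument above establishes, is that \emph{any} such solution is unique.
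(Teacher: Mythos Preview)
Your proof is correct and is exactly the argument the paper gives in the paragraph preceding the lemma: assume two solutions $\mc{X}_1,\mc{X}_2$, use $\mc{X}_1=\mc{X}_1\n\mc{A}\m\mc{X}_1$, and substitute $\mc{X}_1\n\mc{A}=\mc{Z}_2=\mc{X}_2\n\mc{A}$ and $\mc{A}\m\mc{X}_1=\mc{Z}_1=\mc{A}\m\mc{X}_2$ to conclude $\mc{X}_1=\mc{X}_2$. Your remark that existence is being tacitly assumed (and that the lemma is really a uniqueness assertion) is a fair reading of how the statement is used later in the paper.
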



\section{Main Results}
In this section, we first obtain a result to find the Moore-Penrose
inverse of the product of two tensors. We next show the existence
and uniqueness of a new type generalized inverse, i.e., the
\textit{product Moore-Penrose inverse} of a tensor.  We then propose
a few necessary and sufficient conditions for coincidence this
product Moore-Penrose inverse with the Moore-Penrose inverses of
tensors.

Let $\mc{S}\in {\C}^{I_{1}\times \cdots\times I_{N}\times
J_{1}\cdots \times J_{M}}$ and $\mc{T}\in {\C}^{J_{1}\times
\cdots\times J_{M}\times K_{1}\times\cdots \times K_{L}}$ be any two
tensors. Let $\mc{S}_{1}=\mc{S}\m\mc{T}_{1}\lp\mc{T}_{1}^{\dg}$,
where $\mc{T}_{1}=\mc{S}^{\dg}\n\mc{S}\m\mc{T}$. Then
$$\mc{S}_{1}\m\mc{T}_{1}=\mc{S}\m\mc{T}_{1}=\mc{S}\m\mc{T}.$$
Using this factorization, we obtain a representation for
$(\mc{S}\m\mc{T})^{\dg}$ in the following result.
%
\begin{theorem}
Let $\mc{S}\in{\C}^{I_{1}\times\cdots \times I_{N}\times
J_{1}\times\cdots \times J_{M}}$, $\mc{T}\in{\C}^{J_{1}\times\cdots
\times J_{M}\times K_{1}\times\cdots \times K_{L}}$ be any two
tensors. Let $\mc{S}_{1}=\mc{S}\m\mc{T}_{1}\lp\mc{T}_{1}^{\dg}$,
where $\mc{T}_{1}=\mc{S}^{\dg}\n\mc{S}\m\mc{T}$. Then
$$(\mc{S}\m\mc{T})^{\dg}=\mc{T}_{1}^{\dg}\m\mc{S}_{1}^{\dg}.$$
\end{theorem}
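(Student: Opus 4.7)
The plan is to check, using the defining property in Definition~\ref{defmpi}, that $\mc{X}:=\mc{T}_1^{\dg}\m\mc{S}_1^{\dg}$ satisfies all four Moore-Penrose equations with respect to $\mc{A}:=\mc{S}\m\mc{T}$. I will lean throughout on the preamble identity $\mc{S}_1\m\mc{T}_1=\mc{S}\m\mc{T}$ and on two further auxiliary identities:
(a) $\mc{S}_1\m(\mc{T}_1\lp\mc{T}_1^{\dg})=\mc{S}_1$, and (b) $\mc{S}_1^{\dg}\n\mc{S}_1=\mc{T}_1\lp\mc{T}_1^{\dg}$. Identity (a) is immediate from $\mc{S}_1=\mc{S}\m\mc{T}_1\lp\mc{T}_1^{\dg}$ together with the Moore-Penrose equation $\mc{T}_1^{\dg}\m\mc{T}_1\lp\mc{T}_1^{\dg}=\mc{T}_1^{\dg}$ for $\mc{T}_1$.

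Granting (a) and (b), the four equations follow routinely. By (a),
\[
\mc{A}\lp\mc{X} \;=\; \mc{S}_1\m(\mc{T}_1\lp\mc{T}_1^{\dg})\m\mc{S}_1^{\dg} \;=\; \mc{S}_1\m\mc{S}_1^{\dg},
\]
which is Hermitian by equation~(3) for $\mc{S}_1$, giving condition~(3); condition~(1) then reduces to $\mc{S}_1\m\mc{S}_1^{\dg}\n\mc{S}_1\m\mc{T}_1=\mc{S}_1\m\mc{T}_1=\mc{A}$, and condition~(2) follows by the same collapse of $\mc{T}_1\lp\mc{T}_1^{\dg}$ inside $\mc{X}\n\mc{A}\lp\mc{X}$ combined with equation~(2) for $\mc{S}_1$. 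By (b),
\[
\mc{X}\n\mc{A} \;=\; \mc{T}_1^{\dg}\m(\mc{S}_1^{\dg}\n\mc{S}_1)\m\mc{T}_1 \;=\; \mc{T}_1^{\dg}\m(\mc{T}_1\lp\mc{T}_1^{\dg})\m\mc{T}_1 \;=\; \mc{T}_1^{\dg}\m\mc{T}_1,
\]
which is Hermitian by equation~(4) for $\mc{T}_1$, giving condition~(4).

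The principal obstacle is identity (b). My plan is to evaluate the common quantity $(\mc{T}_1\lp\mc{T}_1^{\dg})\m(\mc{S}_1^{\dg}\n\mc{S}_1)$ in two different ways. First, a direct unpacking gives $\mc{S}^{\dg}\n\mc{S}_1=(\mc{S}^{\dg}\n\mc{S})\m(\mc{T}_1\lp\mc{T}_1^{\dg})=\mc{T}_1\lp\mc{T}_1^{\dg}$, using the idempotence consequence $(\mc{S}^{\dg}\n\mc{S})\m\mc{T}_1=\mc{T}_1$; inserting the Moore-Penrose identity $\mc{S}_1=\mc{S}_1\m\mc{S}_1^{\dg}\n\mc{S}_1$ into $\mc{S}^{\dg}\n\mc{S}_1$ then yields $\mc{T}_1\lp\mc{T}_1^{\dg}=(\mc{T}_1\lp\mc{T}_1^{\dg})\m(\mc{S}_1^{\dg}\n\mc{S}_1)$. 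Second, Hermitian-transposing identity (a) gives $(\mc{T}_1\lp\mc{T}_1^{\dg})\m\mc{S}_1^H=\mc{S}_1^H$; right-multiplying by $(\mc{S}_1^{\dg})^H$ via $\n$-contraction and using $\mc{S}_1^H\n(\mc{S}_1^{\dg})^H=(\mc{S}_1^{\dg}\n\mc{S}_1)^H=\mc{S}_1^{\dg}\n\mc{S}_1$ produces $(\mc{T}_1\lp\mc{T}_1^{\dg})\m(\mc{S}_1^{\dg}\n\mc{S}_1)=\mc{S}_1^{\dg}\n\mc{S}_1$. Equating the two conclusions delivers (b).

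The subtle point is that both $\mc{S}_1^{\dg}\n\mc{S}_1$ and $\mc{T}_1\lp\mc{T}_1^{\dg}$ are Hermitian idempotents, and what must be extracted from the Greville-type construction of $\mc{S}_1$ and $\mc{T}_1$ is not merely that they commute (which comes cheaply from the adapted form of Lemma~\ref{ir}) but that they actually coincide; the two competing evaluations of their common product above are precisely what force equality rather than one-sided containment.
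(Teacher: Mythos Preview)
Your proof is correct and follows essentially the same route as the paper: both verify the four Moore--Penrose equations for $\mc{X}=\mc{T}_1^{\dg}\m\mc{S}_1^{\dg}$ by first reducing $\mc{A}\lp\mc{X}$ to $\mc{S}_1\m\mc{S}_1^{\dg}$ via your identity~(a), and then establishing the key equality $\mc{S}_1^{\dg}\n\mc{S}_1=\mc{T}_1\lp\mc{T}_1^{\dg}$ from the observation $\mc{S}^{\dg}\n\mc{S}_1=\mc{T}_1\lp\mc{T}_1^{\dg}$ combined with two Hermitian--idempotent evaluations of the product $(\mc{T}_1\lp\mc{T}_1^{\dg})\m(\mc{S}_1^{\dg}\n\mc{S}_1)$. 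Your derivation of the ``second evaluation'' (transposing~(a) and post-multiplying by $(\mc{S}_1^{\dg})^H$) is a minor rephrasing of the paper's argument that the two Hermitian idempotents yield equivalent one-sided relations, but the substance is identical.
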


\begin{proof}
Let $\mc{A}=\mc{S}\m\mc{T}=\mc{S}_{1}\m\mc{T}_{1}$ and
$\mc{X}=\mc{T}_{1}^{\dg}\m\mc{S}_{1}^{\dg}$. It is sufficient to
show that $\mc{A}$ and $\mc{X}$ satisfy the equations in Definition
\ref{defmpi}. Using the definition of $\mc{S}_{1}$, we have
$\mc{S}_{1}\m\mc{T}_{1}\lp\mc{T}_{1}^{\dg}=\mc{S}_{1}$, which yields
$\mc{A}\lp\mc{X}=\mc{S}_{1}\m\mc{S}_{1}^{\dg}$. Thus,
$\mc{A}\lp\mc{X}\n\mc{A}=\mc{A}$, $\mc{X}\n\mc{A}\lp\mc{X}=\mc{X}$,
and also it follows that $\mc{A}\lp\mc{X}$ is Hermitian. Now, it
remains to show that $\mc{X}\n\mc{A}$ is Hermitian. We observe that
\begin{equation}\label{fteq1}
    \mc{S}^{\dg}\n\mc{S}_{1}=\mc{T}_{1}\lp\mc{T}_{1}^{\dg}.
\end{equation}
Also,
$\mc{S}_{1}^{\dg}\n\mc{S}_{1}\m\mc{T}_{1}\lp\mc{T}_{1}^{\dg}=\mc{S}_{1}^{\dg}\n\mc{S}_{1}$
and
$\mc{T}_{1}\lp\mc{T}_{1}^{\dg}\m\mc{S}_{1}^{\dg}\n\mc{S}_{1}=\mc{S}_{1}^{\dg}\n\mc{S}_{1}$
are equivalent due to the fact that both
$\mc{S}_{1}^{\dg}\n\mc{S}_{1}$ and  $\mc{T}_{1}\lp\mc{T}_{1}^{\dg}$
are Hermitian. Using equation \eqref{fteq1}, we now get
$\mc{S}_{1}^{\dg}\n\mc{S}_{1}=\mc{T}_{1}\lp\mc{T}_{1}^{\dg}.$

From this it follows $\mc{X}\n\mc{A}=\mc{T}_{1}^{\dg}\m\mc{T}_{1}$
is Hermitian. Hence,
$(\mc{S}\m\mc{T})^{\dg}=\mc{T}_{1}^{\dg}\m\mc{S}_{1}^{\dg}.$

\end{proof}

 Existence and uniqueness of a new type of generalized inverse of tensor is proposed in next theorem.

\begin{theorem}\label{Tpgi}
For any tensor $\mc{A}\in {\C}^{I_{1}\times \cdots\times I_{N}\times
J_{1}\times\cdots \times J_{M}}$, let
$\mc{A}=\mc{R}\kp\mc{S}\lp\mc{T}$, where $\mc{R}\in
{\C}^{I_{1}\times \cdots\times I_{N}\times H_{1}\times\cdots \times
H_{K}}$, $\mc{S}\in {\C}^{H_{1}\times \cdots\times H_{K}\times
G_{1}\times\cdots \times G_{L}}$, and $\mc{T}\in
{\C}^{G_{1}\times\cdots \times G_{L}\times J_{1}\times\cdots \times
J_{M}}$. Then, there is a unique matrix $\mc{X}\in
{\C}^{J_{1}\times\cdots \times J_{M}\times I_{1}\times \cdots\times
I_{N}}$ such that
\begin{eqnarray}
\mc{A}\m\mc{X}\n\mc{A}&=&\mc{A,}\label{pgi1}\\
\mc{X}\n\mc{A}\m\mc{X}&=&\mc{X},\label{pgi2}\\
(\mc{R}^{\dg}\n\mc{A}\m\mc{X}\n\mc{R})^{H}&=& \mc{R}^{\dg}\n\mc{A}\m\mc{X}\n\mc{R},\label{pgi3}\\
(\mc{T}\m\mc{X}\n\mc{A}\m\mc{T}^{\dg})^{H}&=& \mc{T}\m\mc{X}\n\mc{A}\m\mc{T}^{\dg},\label{pgi4}\\
\mc{X}\n\mc{R}\kp\mc{R}^{\dg}&=&\mc{X},\label{pgi5}\\
\mc{T}^{\dg}\lp\mc{T}\n\mc{X}&=&\mc{X}\label{pgi6}.
\end{eqnarray}
\end{theorem}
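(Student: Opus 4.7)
The strategy is to construct an explicit candidate for $\mc{X}$ based on the factorization $\mc{A}=\mc{R}\kp\mc{S}\lp\mc{T}$, verify the six equations, and then establish uniqueness via Lemma \ref{ul}. From the factorization the range of $\mc{A}$ sits inside the range of $\mc{R}$, so the standard Moore-Penrose projector identity gives $\mc{R}\kp\mc{R}^{\dg}\n\mc{A}=\mc{A}$, and symmetrically $\mc{A}\m\mc{T}^{\dg}\lp\mc{T}=\mc{A}$. Introducing the reduced middle factor $\mc{C}=\mc{R}^{\dg}\n\mc{A}\m\mc{T}^{\dg}$, these rewrite as $\mc{A}\m\mc{T}^{\dg}=\mc{R}\kp\mc{C}$ and $\mc{R}^{\dg}\n\mc{A}=\mc{C}\lp\mc{T}$, which will drive all subsequent computations. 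The candidate I propose is
\[
\mc{X}=\mc{T}^{\dg}\lp\mc{C}^{\dg}\kp\mc{R}^{\dg}.
\]

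For verification, equations \eqref{pgi5} and \eqref{pgi6} are immediate from the Moore-Penrose identities $\mc{R}^{\dg}\n\mc{R}\kp\mc{R}^{\dg}=\mc{R}^{\dg}$ and $\mc{T}^{\dg}\lp\mc{T}\m\mc{T}^{\dg}=\mc{T}^{\dg}$. Equations \eqref{pgi1} and \eqref{pgi2} reduce, after using the rewrites above, to the Moore-Penrose relations $\mc{C}\lp\mc{C}^{\dg}\kp\mc{C}=\mc{C}$ and $\mc{C}^{\dg}\kp\mc{C}\lp\mc{C}^{\dg}=\mc{C}^{\dg}$. The delicate step is \eqref{pgi3}: the expression simplifies to $\mc{C}\lp\mc{C}^{\dg}\kp(\mc{R}^{\dg}\n\mc{R})$, and I would use Lemma \ref{ir} together with the range inclusion $\mathrm{range}(\mc{C})\subseteq\mathrm{range}(\mc{R}^{\dg})$ to collapse this to $\mc{C}\lp\mc{C}^{\dg}$, which is Hermitian. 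Equation \eqref{pgi4} follows by a symmetric argument using the inclusion $\mathrm{range}(\mc{C}^{H})\subseteq\mathrm{range}(\mc{T})$.

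For uniqueness, given any solution $\mc{X}$, set $\mc{Y}=\mc{T}\m\mc{X}\n\mc{R}$. From \eqref{pgi5} and \eqref{pgi6} one obtains $\mc{X}=\mc{T}^{\dg}\lp\mc{Y}\kp\mc{R}^{\dg}$, and consequently the absorption identities $\mc{T}\m\mc{T}^{\dg}\lp\mc{Y}=\mc{Y}=\mc{Y}\kp\mc{R}^{\dg}\n\mc{R}$ hold automatically. Pre- and post-multiplying \eqref{pgi1} by $\mc{R}^{\dg}\n$ and $\m\mc{T}^{\dg}$ yields $\mc{C}\lp\mc{Y}\kp\mc{C}=\mc{C}$; the Hermitian conditions \eqref{pgi3} and \eqref{pgi4} translate directly into $\mc{C}\lp\mc{Y}$ and $\mc{Y}\kp\mc{C}$ being Hermitian; and sandwiching \eqref{pgi2} by $\mc{T}\m(\cdot)\n\mc{R}$, while exploiting $\mc{C}=(\mc{R}^{\dg}\n\mc{R})\kp\mc{S}\lp(\mc{T}\m\mc{T}^{\dg})$ and the absorption identities, produces $\mc{Y}\kp\mc{C}\lp\mc{Y}=\mc{Y}$. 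These are the four defining equations of the Moore-Penrose inverse of $\mc{C}$, so by Definition \ref{defmpi} one has $\mc{Y}=\mc{C}^{\dg}$, and hence $\mc{X}=\mc{T}^{\dg}\lp\mc{C}^{\dg}\kp\mc{R}^{\dg}$ is forced.

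The main obstacle will be the verification of \eqref{pgi3} and \eqref{pgi4}: one has to show that the Hermitian projectors $\mc{C}\lp\mc{C}^{\dg}$ and $\mc{R}^{\dg}\n\mc{R}$ commute (in fact, one absorbs the other), and analogously for $\mc{C}^{\dg}\kp\mc{C}$ and $\mc{T}\m\mc{T}^{\dg}$. These commutations rest on the range inclusions built into the factorization of $\mc{A}$ through $\mc{R}$ and $\mc{T}$, and Lemma \ref{ir} is the natural tool to execute them cleanly. Once these projector absorptions are in hand, the remaining equations and the uniqueness argument reduce to routine applications of the Moore-Penrose identities.
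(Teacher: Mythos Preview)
Your proposal is correct and follows essentially the same approach as the paper: both hinge on showing that the six equations force $\mc{T}\m\mc{X}\n\mc{R}=(\mc{R}^{\dg}\n\mc{A}\m\mc{T}^{\dg})^{\dg}$, which together with \eqref{pgi5}--\eqref{pgi6} pins down $\mc{X}=\mc{T}^{\dg}\lp(\mc{R}^{\dg}\n\mc{A}\m\mc{T}^{\dg})^{\dg}\kp\mc{R}^{\dg}$, and your use of Lemma~\ref{ir} to absorb the projectors $\mc{R}^{\dg}\n\mc{R}$ and $\mc{T}\m\mc{T}^{\dg}$ is exactly the mechanism the paper relies on. The only organizational difference is that you first verify the explicit candidate and then argue uniqueness, whereas the paper establishes the equivalence of the six-equation system with the three-equation system $\{\eqref{pgi5},\eqref{pgi6},\,\mc{T}\m\mc{X}\n\mc{R}=(\mc{R}^{\dg}\n\mc{A}\m\mc{T}^{\dg})^{\dg}\}$ and reads off both at once; note also that your uniqueness argument does not in fact invoke Lemma~\ref{ul} as announced but proceeds (correctly) by identifying $\mc{Y}$ with $\mc{C}^{\dg}$ via Definition~\ref{defmpi}.
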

\begin{proof}
$\mc{A}=\mc{R}\kp\mc{S}\lp\mc{T}$ gives
\begin{equation}\label{pgipe1}
    \mc{R}\kp\mc{R}^{\dg}\n\mc{A}=\mc{A}=\mc{A}\m\mc{T}^{\dg}\lp\mc{T}.
\end{equation}
Equations \eqref{pgi1}, \eqref{pgi2}, \eqref{pgi3} and \eqref{pgi4}
together with equation \eqref{pgipe1} yield,
$(\mc{R}^{\dg}\n\mc{A}\m\mc{T}^{\dg})\lp(\mc{T}\m\mc{X}$
$\n\mc{R})\kp(\mc{R}^{\dg}\n\mc{A}\m\mc{T}^{\dg})=\mc{R}^{\dg}\n\mc{A}\m\mc{T}^{\dg}$,
$(\mc{T}\m\mc{X}\n\mc{R})\kp(\mc{R}^{\dg}\n\mc{A}\m\mc{T}^{\dg})\lp(\mc{T}\m\mc{X}\n\mc{R})=(\mc{T}$
$\n\mc{X}\n\mc{R})$,
$[(\mc{R}^{\dg}\n\mc{A}\m\mc{T}^{\dg})\lp(\mc{T}\m\mc{X}\n\mc{R})]^{H}=(\mc{R}^{\dg}\n\mc{A}\m\mc{T}^{\dg})\lp(\mc{T}\m\mc{X}\n\mc{R})$,
and $[(\mc{T}$
$\m\mc{X}\n\mc{R})\kp(\mc{R}^{\dg}\n\mc{A}\m\mc{T}^{\dg})]^{H}=(\mc{T}\m\mc{X}\n\mc{R})\kp(\mc{R}^{\dg}\n\mc{A}\m\mc{T}^{\dg})$,
respectively. We thus have
\begin{equation}\label{pgi}
    (\mc{R}^{\dg}\n\mc{A}\m\mc{T}^{\dg})^{\dg}=\mc{T}\m\mc{X}\n\mc{R}.
\end{equation}
Conversely, suppose equation \eqref{pgi} holds. Using equation
\eqref{pgipe1},
$\mc{A}\m\mc{X}\n{A}=\mc{R}\kp\mc{R}^{\dg}\n\mc{A}\m$
$\mc{T}^{\dg}\lp\mc{T}=\mc{A}$,
$(\mc{R}^{\dg}\n\mc{A}\m\mc{X}\n\mc{R})^{H}=\mc{R}^{\dg}\n\mc{A}\m\mc{T}^{\dg}\lp\mc{T}\m\mc{X}\n\mc{R}=\mc{R}^{\dg}\n\mc{A}\m\mc{X}\n\mc{R}$,
$(\mc{T}\m$
$\mc{X}\n\mc{A}\m\mc{T}^{\dg})^{H}=\mc{T}\m\mc{X}\n\mc{R}\kp\mc{R}^{\dg}\n\mc{A}\m\mc{T}^{\dg}=\mc{T}\m\mc{X}\n\mc{A}\m\mc{T}^{\dg}$.
But, if equation \eqref{pgi5} and \eqref{pgi6} hold, then using
equation \eqref{pgipe1}
\begin{align*}
    \mc{X}\n\mc{A}\m\mc{X}&=\mc{T}^{\dg}\lp\mc{T}\m\mc{X}\n\mc{A}\m\mc{X}\n\mc{R}\kp\mc{R}^{\dg}\\
    &=\mc{T}^{\dg}\lp\mc{T}\m\mc{X}\n\mc{R}\kp\mc{R}^{\dg}\\
    &=\mc{X}.
\end{align*}
Thus, equations \eqref{pgi1}-\eqref{pgi6} is equivalent to equations
\eqref{pgi5}, \eqref{pgi6} and \eqref{pgi}. And from the latter
three equations
\begin{align}
    \mc{X}&=\mc{T}^{\dg}\lp\mc{T}\m\mc{X}\n\mc{R}\kp\mc{R}^{\dg}\nonumber\\
    &=\mc{T}^{\dg}\lp(\mc{R}^{\dg}\n\mc{A}\m\mc{T}^{\dg})^{\dg}\kp\mc{R}^{\dg}\label{epgi}.
\end{align}
Suppose there exist $\mc{X}$ and $\mc{Y}$ satisfying \eqref{pgi5},
\eqref{pgi6} and \eqref{pgi}, then
\begin{align*}
    \mc{X}&=\mc{T}^{\dg}\lp\mc{T}\m\mc{X}\n\mc{R}\kp\mc{R}^{\dg}\\
    &=\mc{T}^{\dg}\lp(\mc{R}^{\dg}\n\mc{A}\m\mc{T}^{\dg})^{\dg}\kp\mc{R}^{\dg}\\
    &=\mc{T}^{\dg}\lp\mc{T}\m\mc{Y}\n\mc{R}\kp\mc{R}^{\dg}\\
    &=\mc{Y}.
\end{align*}
Hence the uniqueness is established.
\end{proof}
 We termed the tensor $\mc{X}$ in the equation \eqref{epgi} as the \textit{product Moore-Penrose inverse} of $\mc{A}$, and denote it as $\mc{A}_{\pi \dg}$.
An alternative representation of the  product Moore-Penrose inverse
is given in the following theorem.
\begin{theorem}\label{pgif}
Let $\mc{A}=\mc{R}\kp\mc{S}\lp\mc{T}\in {\C}^{I_{1}\times
\cdots\times I_{N}\times J_{1}\times\cdots \times J_{M}}$, where
$\mc{R}\in {\C}^{I_{1}\times \cdots\times I_{N}\times
H_{1}\times\cdots \times H_{K}}$, $\mc{S}\in {\C}^{H_{1}\times
\cdots\times H_{K}\times G_{1}\times\cdots \times G_{L}}$, and
$\mc{T}\in {\C}^{G_{1}\times\cdots \times G_{L}\times
J_{1}\times\cdots \times J_{M}}$. Then
$$(\mc{R}^{\dg}\n\mc{A}\m\mc{T}^{\dg})^{\dg}=(\mc{A}\m\mc{T}^{\dg})^{\dg}\n\mc{A}\m(\mc{R}^{\dg}\n\mc{A})^{\dg}.$$

\end{theorem}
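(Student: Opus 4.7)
The plan is to abbreviate $\mc{P}=\mc{A}\m\mc{T}^{\dg}$ and $\mc{Q}=\mc{R}^{\dg}\n\mc{A}$, so that the claim rewrites as $\mc{B}^{\dg}=\mc{P}^{\dg}\n\mc{A}\m\mc{Q}^{\dg}$ where $\mc{B}:=\mc{R}^{\dg}\n\mc{A}\m\mc{T}^{\dg}=\mc{R}^{\dg}\n\mc{P}=\mc{Q}\m\mc{T}^{\dg}$. I would set $\mc{X}:=\mc{P}^{\dg}\n\mc{A}\m\mc{Q}^{\dg}$ and verify directly that $(\mc{B},\mc{X})$ satisfies the four equations of Definition \ref{defmpi}; by uniqueness of the Moore--Penrose inverse, this forces $\mc{X}=\mc{B}^{\dg}$.

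The central preliminary step is to extract two absorption identities from the factorisation $\mc{A}=\mc{R}\kp\mc{S}\lp\mc{T}$. Applying the first Moore--Penrose equation to $\mc{R}$ and to $\mc{T}$ gives $\mc{R}\kp\mc{R}^{\dg}\n\mc{A}=\mc{A}$ and $\mc{A}\m\mc{T}^{\dg}\lp\mc{T}=\mc{A}$, i.e.\ $\mc{A}=\mc{R}\kp\mc{Q}=\mc{P}\lp\mc{T}$. Combining each in turn with the first MP equation for $\mc{P}$ and for $\mc{Q}$ yields the pair
\begin{align*}
\mc{P}\lp\mc{P}^{\dg}\n\mc{A}&=\mc{P}\lp\mc{P}^{\dg}\n\mc{P}\lp\mc{T}=\mc{P}\lp\mc{T}=\mc{A},\\
\mc{A}\m\mc{Q}^{\dg}\kp\mc{Q}&=\mc{R}\kp\mc{Q}\m\mc{Q}^{\dg}\kp\mc{Q}=\mc{R}\kp\mc{Q}=\mc{A}.
\end{align*}
These two identities are the engine of the proof.

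With them in hand the four checks collapse to one-line computations. Using $\mc{B}=\mc{R}^{\dg}\n\mc{P}$ and the first absorption identity,
$$\mc{B}\lp\mc{X}=\mc{R}^{\dg}\n\mc{P}\lp\mc{P}^{\dg}\n\mc{A}\m\mc{Q}^{\dg}=\mc{R}^{\dg}\n\mc{A}\m\mc{Q}^{\dg}=\mc{Q}\m\mc{Q}^{\dg},$$
which is Hermitian; symmetrically, using $\mc{B}=\mc{Q}\m\mc{T}^{\dg}$ and the second absorption identity, $\mc{X}\kp\mc{B}=\mc{P}^{\dg}\n\mc{P}$, also Hermitian. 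The remaining two MP equations then follow at once from one more pass through the MP equations for $\mc{Q}$ and $\mc{P}$: $\mc{B}\lp\mc{X}\kp\mc{B}=\mc{Q}\m\mc{Q}^{\dg}\kp\mc{Q}\m\mc{T}^{\dg}=\mc{Q}\m\mc{T}^{\dg}=\mc{B}$ and $\mc{X}\kp\mc{B}\lp\mc{X}=\mc{P}^{\dg}\n\mc{P}\lp\mc{P}^{\dg}\n\mc{A}\m\mc{Q}^{\dg}=\mc{P}^{\dg}\n\mc{A}\m\mc{Q}^{\dg}=\mc{X}$.

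Nothing in this argument is conceptually hard; the real work is index bookkeeping. The place I would be most careful is keeping track of which Einstein product ($\n,\m,\kp,\lp$) is active at each stage, since $\mc{P}$ and $\mc{Q}$ live on different mixed spaces than $\mc{B}$, so when reducing $\mc{B}\lp\mc{X}\kp\mc{B}$ the outer $\lp$ and $\kp$ must be matched against the correct MP identities for $\mc{P}$ and $\mc{Q}$ in the correct slots. Once that bookkeeping is laid out cleanly, the entire proof amounts to the two displays above.
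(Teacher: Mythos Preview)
Your proof is correct. The two absorption identities $\mc{P}\lp\mc{P}^{\dg}\n\mc{A}=\mc{A}$ and $\mc{A}\m\mc{Q}^{\dg}\kp\mc{Q}=\mc{A}$ are exactly what is needed, and your four one-line verifications of the Moore--Penrose axioms for the pair $(\mc{R}^{\dg}\n\mc{A}\m\mc{T}^{\dg},\,\mc{P}^{\dg}\n\mc{A}\m\mc{Q}^{\dg})$ go through as written; the index bookkeeping ($\lp$ on the $G$-side, $\kp$ on the $H$-side) is also right.

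Your route, however, differs from the paper's. The paper does \emph{not} verify the four Moore--Penrose equations directly. Instead it invokes the preceding Theorem~\ref{Tpgi}: having already shown that $\mc{A}_{\pi\dg}=\mc{T}^{\dg}\lp(\mc{R}^{\dg}\n\mc{A}\m\mc{T}^{\dg})^{\dg}\kp\mc{R}^{\dg}$ satisfies $\mc{A}\m\mc{A}_{\pi\dg}\n\mc{A}=\mc{A}$, it reads off the identity
\[
\mc{A}=\mc{A}\m\mc{T}^{\dg}\lp(\mc{R}^{\dg}\n\mc{A}\m\mc{T}^{\dg})^{\dg}\kp\mc{R}^{\dg}\n\mc{A},
\]
then pre- and post-multiplies by $(\mc{A}\m\mc{T}^{\dg})^{\dg}$ and $(\mc{R}^{\dg}\n\mc{A})^{\dg}$, absorbing the outer projectors (via Lemma~\ref{ir}-type reasoning) to reach the conclusion. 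Your argument is more self-contained: it needs only Definition~\ref{defmpi} and the elementary relation \eqref{pgipe1}, so it establishes the formula independently of the product Moore--Penrose machinery. The paper's approach, on the other hand, makes explicit that this theorem is really an alternative description of the middle factor in $\mc{A}_{\pi\dg}$, which is why it is placed immediately after Theorem~\ref{Tpgi}.
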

\begin{proof}
In view of equation \eqref{epgi}, equation \eqref{pgi1} results
\begin{equation}\label{apgie1}
\mc{A}=\mc{A}\m\mc{T}^{\dg}\lp(\mc{R}^{\dg}\n\mc{A}\m\mc{T}^{\dg})^{\dg}\kp\mc{R}^{\dg}\n\mc{A}.
\end{equation}
Pre-multiplying $(\mc{A}\m\mc{T}^{\dg})^{\dg}$ and post-multiplying
$(\mc{R}^{\dg}\n\mc{A})^{\dg}$ to equation \eqref{apgie1}, we obtain
\begin{equation}
(\mc{A}\m\mc{T}^{\dg})^{\dg}\n\mc{A}\m(\mc{R}^{\dg}\n\mc{A})^{\dg}=(\mc{R}^{\dg}\n\mc{A}\m\mc{T}^{\dg})^{\dg}.
\end{equation}
\end{proof}

One more expression of $\mc{A}_{\pi\dg}$ is provided below,
\begin{theorem}\label{aepgi}
$\mc{A}_{\pi\dg}=\mc{T}^{\dg}\lp(\mc{A}\m\mc{T}^{\dg})^{\dg}\n\mc{A}\m(\mc{R}^{\dg}\n\mc{A})^{\dg}\kp\mc{R}^{\dg}$
is the unique tensor $\mc{X}$, such that
$\mc{X}\n\mc{A}\m\mc{X}=\mc{X}$,
$\mc{A}\m\mc{X}=\mc{A}\m(\mc{R}^{\dg}\n\mc{A})^{\dg}\kp\mc{R}^{\dg}$
and
$\mc{X}\n\mc{A}=\mc{T}^{\dg}\lp(\mc{A}\m\mc{T}^{\dg})^{\dg}\n\mc{A}$.
\end{theorem}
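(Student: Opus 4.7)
The plan is to verify the three stated equalities directly for the tensor $\mc{X}$ given by the formula, and then invoke Lemma \ref{ul} to conclude uniqueness. A convenient first observation is that combining the representation $\mc{A}_{\pi\dg}=\mc{T}^{\dg}\lp(\mc{R}^{\dg}\n\mc{A}\m\mc{T}^{\dg})^{\dg}\kp\mc{R}^{\dg}$ from \eqref{epgi} with the alternative expression for $(\mc{R}^{\dg}\n\mc{A}\m\mc{T}^{\dg})^{\dg}$ supplied by Theorem \ref{pgif} shows that the tensor $\mc{X}$ appearing in the statement is precisely $\mc{A}_{\pi\dg}$; hence general properties already known for $\mc{A}_{\pi\dg}$ may be used freely.

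The crux of the proof is the pair of reductions
\begin{equation*}
\mc{A}\m\mc{T}^{\dg}\lp(\mc{A}\m\mc{T}^{\dg})^{\dg}\n\mc{A}=\mc{A}\qquad\text{and}\qquad\mc{A}\m(\mc{R}^{\dg}\n\mc{A})^{\dg}\kp\mc{R}^{\dg}\n\mc{A}=\mc{A},
\end{equation*}
which I would prove by extracting $\mc{A}$ through the factorizations in \eqref{pgipe1}. For the first, set $\mc{B}:=\mc{A}\m\mc{T}^{\dg}$; since $\mc{A}=\mc{B}\lp\mc{T}$, associativity together with the first Moore-Penrose equation $\mc{B}\lp\mc{B}^{\dg}\n\mc{B}=\mc{B}$ gives $\mc{B}\lp\mc{B}^{\dg}\n\mc{A}=(\mc{B}\lp\mc{B}^{\dg}\n\mc{B})\lp\mc{T}=\mc{B}\lp\mc{T}=\mc{A}$. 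The second reduction is the mirror image: put $\mc{C}:=\mc{R}^{\dg}\n\mc{A}$ so that $\mc{A}=\mc{R}\kp\mc{C}$, and apply $\mc{C}\m\mc{C}^{\dg}\kp\mc{C}=\mc{C}$ to collapse $\mc{A}\m\mc{C}^{\dg}\kp\mc{C}$ to $\mc{R}\kp\mc{C}=\mc{A}$. Feeding these two identities into $\mc{A}\m\mc{X}$ and $\mc{X}\n\mc{A}$ eliminates the central block of the formula for $\mc{X}$ and yields exactly $\mc{A}\m\mc{X}=\mc{A}\m(\mc{R}^{\dg}\n\mc{A})^{\dg}\kp\mc{R}^{\dg}$ and $\mc{X}\n\mc{A}=\mc{T}^{\dg}\lp(\mc{A}\m\mc{T}^{\dg})^{\dg}\n\mc{A}$.

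The third equation $\mc{X}\n\mc{A}\m\mc{X}=\mc{X}$ then comes for free by a single substitution: the computed form of $\mc{X}\n\mc{A}$ turns $(\mc{X}\n\mc{A})\m\mc{X}$ into $\mc{T}^{\dg}\lp(\mc{A}\m\mc{T}^{\dg})^{\dg}\n(\mc{A}\m\mc{X})$, and plugging in the computed form of $\mc{A}\m\mc{X}$ reassembles $\mc{X}$. Uniqueness finally follows from Lemma \ref{ul} with $\mc{Z}_{1}=\mc{A}\m(\mc{R}^{\dg}\n\mc{A})^{\dg}\kp\mc{R}^{\dg}$ and $\mc{Z}_{2}=\mc{T}^{\dg}\lp(\mc{A}\m\mc{T}^{\dg})^{\dg}\n\mc{A}$, each manifestly independent of $\mc{X}$. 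The only real obstacle I anticipate is notational bookkeeping: four distinct Einstein contractions $\m,\n,\kp,\lp$ must be tracked against the index blocks $I,J,H,G$ of $\mc{R},\mc{S},\mc{T}$, and one has to verify that each invocation of a Moore-Penrose equation pairs the correct contractions with the correct indices. Structurally, the argument collapses twice to the first defining equation of the Moore-Penrose inverse, so no deeper difficulty is expected.
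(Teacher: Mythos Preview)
Your proposal is correct and follows essentially the same route as the paper: identify $\mc{X}$ with $\mc{A}_{\pi\dg}$ via Theorem \ref{pgif} and \eqref{epgi}, use the factorizations in \eqref{pgipe1} to collapse $\mc{A}\m\mc{X}$ and $\mc{X}\n\mc{A}$, deduce $\mc{X}\n\mc{A}\m\mc{X}=\mc{X}$, and then appeal to Lemma \ref{ul} for uniqueness. Your write-up simply makes explicit the two reductions (which coincide with the paper's \eqref{ea1}--\eqref{ea2}) that the paper compresses into ``a simple calculation.''
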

\begin{proof}
By using Theorem \ref{pgif}z and equation \eqref{epgi}, we have
$$\mc{X}=\mc{A}_{\pi\dg}=\mc{T}^{\dg}\lp(\mc{A}\m\mc{T}^{\dg})^{\dg}\n\mc{A}\m(\mc{R}^{\dg}\n\mc{A})^{\dg}\kp\mc{R}^{\dg}.$$
A simple calculation leads to $\mc{X}\n\mc{A}\m\mc{X}=\mc{X}$. Since
$\mc{R}\kp\mc{R}^{\dg}\n\mc{A}=\mc{A}=\mc{A}\m\mc{T}^{\dg}\lp\mc{T}$,
so
$\mc{A}\m\mc{X}=\mc{A}\m(\mc{R}^{\dg}\n\mc{A})^{\dg}\kp\mc{R}^{\dg}$
and
$\mc{X}\n\mc{A}=\mc{T}^{\dg}\lp(\mc{A}\m\mc{T}^{\dg})^{\dg}\n\mc{A}$.
The uniqueness follows from Lemma \ref{ul}.

\end{proof}
Next, we present a result to find the Moore-Penrose inverse of a
tensor $\mc{A}$ with the factorization
$\mc{A}=\mc{R}\kp\mc{S}\lp\mc{T}$.

\begin{theorem}\label{aegi}Let $\mc{A}=\mc{R}\kp\mc{S}\lp\mc{T}\in {\C}^{I_{1}\times \cdots\times I_{N}\times J_{1}\times\cdots \times J_{M}}$, where $\mc{R}\in {\C}^{I_{1}\times \cdots\times I_{N}\times H_{1}\times\cdots \times H_{K}}$, $\mc{S}\in {\C}^{H_{1}\times \cdots\times H_{K}\times G_{1}\times\cdots \times G_{L}}$, and $\mc{T}\in {\C}^{G_{1}\times\cdots \times G_{L}\times J_{1}\times\cdots \times J_{M}}$. Then
\begin{align*}\mc{A}^{\dg}=(\mc{R}\kp\mc{S}\lp\mc{T})^{\dg}&=(\mc{R}^{\dg}\n\mc{A})^{\dg}\kp\mc{R}^{\dg}\n\mc{A}\m\mc{T}^{\dg}\lp(\mc{A}\m\mc{T}^{\dg})^{\dg}\\
&=(\mc{R}^{\dg}\n\mc{A})^{\dg}\kp\mc{S}\lp(\mc{A}\m\mc{T}^{\dg})^{\dg}.
\end{align*}

\end{theorem}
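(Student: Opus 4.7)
The plan is two-part: first verify the four Moore--Penrose equations of Definition~\ref{defmpi} directly for the first candidate, and then collapse that candidate to the second form by invoking Lemma~\ref{ir} on each side. Set $\mc{P}=\mc{R}^{\dg}\n\mc{A}$ and $\mc{C}=\mc{A}\m\mc{T}^{\dg}$, so that equation~\eqref{pgipe1} reads $\mc{A}=\mc{R}\kp\mc{P}=\mc{C}\lp\mc{T}$, and put $\mc{X}=\mc{P}^{\dg}\kp\mc{R}^{\dg}\n\mc{A}\m\mc{T}^{\dg}\lp\mc{C}^{\dg}$. The central block equals both $\mc{P}\m\mc{T}^{\dg}$ and $\mc{R}^{\dg}\n\mc{C}$, a flexibility I would use on the left and right respectively.

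I would first compute $\mc{X}\n\mc{A}$. Rewriting $\mc{A}=\mc{C}\lp\mc{T}$ on the right and applying $\mc{C}\lp\mc{C}^{\dg}\n\mc{C}=\mc{C}$ (the first Moore--Penrose equation for $\mc{C}$) telescopes the product down to $\mc{P}^{\dg}\kp\mc{P}$, which is Hermitian by the fourth Moore--Penrose equation for $\mc{P}$; this yields equation~(4) of Definition~\ref{defmpi} and reduces equation~(1) to $\mc{A}\m\mc{P}^{\dg}\kp\mc{P}=\mc{R}\kp\mc{P}\m\mc{P}^{\dg}\kp\mc{P}=\mc{R}\kp\mc{P}=\mc{A}$ via the first Moore--Penrose equation for $\mc{P}$. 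Symmetrically, substituting $\mc{A}=\mc{R}\kp\mc{P}$ on the left of $\mc{A}\m\mc{X}$ and contracting $\mc{P}\m\mc{P}^{\dg}\kp\mc{P}=\mc{P}$ in the middle collapses it to $\mc{C}\lp\mc{C}^{\dg}$, Hermitian, which is equation~(3). Equation~(2) is then immediate: $\mc{X}\n\mc{A}\m\mc{X}=(\mc{P}^{\dg}\kp\mc{P})\m\mc{X}=(\mc{P}^{\dg}\kp\mc{P}\m\mc{P}^{\dg})\kp\mc{R}^{\dg}\n\mc{A}\m\mc{T}^{\dg}\lp\mc{C}^{\dg}=\mc{X}$ by the second Moore--Penrose equation for $\mc{P}$. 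Uniqueness of the Moore--Penrose inverse then forces $\mc{X}=\mc{A}^{\dg}$.

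For the second equality, substitute $\mc{A}=\mc{R}\kp\mc{S}\lp\mc{T}$ inside the central block of $\mc{X}$ to produce the middle as $\mc{R}^{\dg}\n\mc{R}\kp\mc{S}\lp\mc{T}\m\mc{T}^{\dg}$. The identity $(\mc{R}^{\dg}\n\mc{R})\kp\mc{P}=\mc{P}$ follows from the second Moore--Penrose equation for $\mc{R}$, and $\mc{R}^{\dg}\n\mc{R}$ is Hermitian by the fourth; Lemma~\ref{ir} therefore gives $\mc{P}^{\dg}\kp(\mc{R}^{\dg}\n\mc{R})=\mc{P}^{\dg}$. The mirror computation uses $\mc{C}\lp(\mc{T}\m\mc{T}^{\dg})=\mc{C}$ together with Hermiticity of $\mc{T}\m\mc{T}^{\dg}$ to yield $(\mc{T}\m\mc{T}^{\dg})\lp\mc{C}^{\dg}=\mc{C}^{\dg}$. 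Absorbing these identities on the corresponding sides eliminates the factors $\mc{R}^{\dg}\n\mc{R}$ and $\mc{T}\m\mc{T}^{\dg}$, leaving exactly $(\mc{R}^{\dg}\n\mc{A})^{\dg}\kp\mc{S}\lp(\mc{A}\m\mc{T}^{\dg})^{\dg}$.

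The chief obstacle is bookkeeping rather than depth: six factors of four distinct outer shapes are woven together by four Einstein products $\kp,\lp,\n,\m$, and at every cancellation one must match the correct product symbol with the correct Moore--Penrose identity for the correct factor among $\mc{P}$, $\mc{C}$, $\mc{R}$, $\mc{T}$. A minor technical caveat is that Lemma~\ref{ir} is stated for square tensors, but the rectangular version invoked in the second part of the argument follows verbatim from the same proof.
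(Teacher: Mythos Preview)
Your argument is correct. The route, however, differs from the paper's for the first equality.

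You verify the four Moore--Penrose equations of Definition~\ref{defmpi} directly for the candidate $\mc{X}$: with $\mc{P}=\mc{R}^{\dg}\n\mc{A}$ and $\mc{C}=\mc{A}\m\mc{T}^{\dg}$ you compute $\mc{X}\n\mc{A}=\mc{P}^{\dg}\kp\mc{P}$ and $\mc{A}\m\mc{X}=\mc{C}\lp\mc{C}^{\dg}$ by telescoping, and then uniqueness forces $\mc{X}=\mc{A}^{\dg}$. The paper instead works from the known $\mc{A}^{\dg}$: it applies Lemma~\ref{ir} (with the Hermitian projectors $\mc{A}^{\dg}\n\mc{A}$ and $\mc{A}\m\mc{A}^{\dg}$) to prove the identities $\mc{P}^{\dg}\kp\mc{P}=\mc{A}^{\dg}\n\mc{A}$ and $\mc{C}\lp\mc{C}^{\dg}=\mc{A}\m\mc{A}^{\dg}$, and then substitutes these into $\mc{A}^{\dg}=\mc{A}^{\dg}\n\mc{A}\m\mc{A}^{\dg}$. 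Your approach is more self-contained (Lemma~\ref{ir} is not needed for the first equality), while the paper's route is shorter and, as a by-product, establishes the labelled equations \eqref{Ea+a} and \eqref{Eaa+} that are reused later in the article. For the second equality both arguments coincide: expand the central block via $\mc{A}=\mc{R}\kp\mc{S}\lp\mc{T}$ and absorb $\mc{R}^{\dg}\n\mc{R}$ and $\mc{T}\m\mc{T}^{\dg}$ using Lemma~\ref{ir}. Your remark about the rectangular form of Lemma~\ref{ir} is accurate and harmless.
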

\begin{proof}
Using equation \eqref{pgipe1},
\begin{align}
    \mc{A}\m(\mc{R}^{\dg}\n\mc{A})^{\dg}\kp(\mc{R}^{\dg}\n\mc{A})&=\mc{A},\label{ea1}\\
    (\mc{A}\m\mc{T}^{\dg})\lp(\mc{A}\m\mc{T}^{\dg})^{\dg}\n\mc{A}&=\mc{A}.\label{ea2}
\end{align}
Pre-multiplying equation \eqref{ea1} and post-multiplying equation
\eqref{ea2} by $\mc{A}^{\dg}$, and using Lemma \ref{ir}, we have
\begin{align}
    (\mc{R}^{\dg}\n\mc{A})^{\dg}\kp\mc{R}^{\dg}\n\mc{A}=\mc{A}^{\dg}\n\mc{A},\label{Ea+a}\\
    \mc{A}\m\mc{T}^{\dg}\lp(\mc{A}\m\mc{T}^{\dg})^{\dg}=\mc{A}\m\mc{A}^{\dg},\label{Eaa+}
\end{align}
respectively, since $\mc{A}^{\dg}\n\mc{A}$ and
$\mc{A}\m\mc{A}^{\dg}$ are Hermitian.
$\mc{A}^{\dg}=\mc{A}^{\dg}\n\mc{A}\m\mc{A}^{\dg}$ with the last two
expressions yields
$$\mc{A}^{\dg}=(\mc{R}\kp\mc{S}\lp\mc{T})^{\dg}=(\mc{R}^{\dg}\n\mc{A})^{\dg}\kp\mc{R}^{\dg}\n\mc{A}\m\mc{T}^{\dg}\lp(\mc{A}\m\mc{T}^{\dg})^{\dg}.$$
And
$\mc{A}^{\dg}=(\mc{R}\kp\mc{S}\lp\mc{T})^{\dg}=(\mc{R}^{\dg}\n\mc{A})^{\dg}\kp\mc{S}\lp(\mc{A}\m\mc{T}^{\dg})^{\dg},$
is due to Lemma \ref{ir}.
\end{proof}
 The following corollary immediately follows from the above result when $\mc{S}=\mc{I}\in{\C}^{J_{1}\times\cdots\times J_{M}\times J_{1}\times \cdots\times J_{M}}$.
\begin{corollary}\label{caegi}
For any tensor $\mc{M}\in{\C}^{I_{1}\times\cdots\times I_{N}\times
J_{1}\times \cdots\times J_{M}}$ and
$\mc{N}\in{\C}^{J_{1}\times\cdots\times J_{M}\times H_{1}\times
\cdots\times H_{K}}$,
$(\mc{M}\m\mc{N})^{\dg}=(\mc{M}^{\dg}\n\mc{M}\m\mc{N})^{\dg}\n(\mc{M}\m\mc{N}\kp\mc{N}^{\dg})^{\dg}$.
\end{corollary}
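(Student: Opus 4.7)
The hint given just before the statement is explicit: this corollary is the case $\mc{S}=\mc{I}$ of Theorem \ref{aegi}, so the plan is to unpack that specialization and check that the product structure collapses correctly. My approach is to plug the data of the corollary into the general factorization $\mc{A}=\mc{R}\kp\mc{S}\lp\mc{T}$ and then invoke the identity
$$\mc{A}^{\dg}=(\mc{R}^{\dg}\n\mc{A})^{\dg}\kp\mc{S}\lp(\mc{A}\m\mc{T}^{\dg})^{\dg}$$
from Theorem \ref{aegi}.

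Concretely, I would take $\mc{R}=\mc{M}\in{\C}^{I_{1}\times\cdots\times I_{N}\times J_{1}\times\cdots\times J_{M}}$, $\mc{T}=\mc{N}\in{\C}^{J_{1}\times\cdots\times J_{M}\times H_{1}\times\cdots\times H_{K}}$, and
$$\mc{S}=\mc{I}\in{\C}^{J_{1}\times\cdots\times J_{M}\times J_{1}\times\cdots\times J_{M}}.$$
With this choice the inner dimensions of the triple factorization collapse ($H_{i}$ and $G_{i}$ both coincide with the $J_{i}$), so $\kp$ and $\lp$ in Theorem \ref{aegi} both reduce to $\m$, and the product $\mc{R}\kp\mc{S}\lp\mc{T}$ is literally $\mc{M}\m\mc{I}\m\mc{N}=\mc{M}\m\mc{N}$. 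Thus $\mc{A}=\mc{M}\m\mc{N}$, and the factorization hypothesis of Theorem \ref{aegi} is satisfied trivially.

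Substituting these choices into the formula from Theorem \ref{aegi}, the factor $\mc{S}=\mc{I}$ in the middle acts as an identity on either side, and hence the expression simplifies to
$$(\mc{M}\m\mc{N})^{\dg}=(\mc{M}^{\dg}\n\mc{M}\m\mc{N})^{\dg}\n(\mc{M}\m\mc{N}\kp\mc{N}^{\dg})^{\dg},$$
which is exactly the claimed identity. No separate verification of the four Moore-Penrose equations is needed since Theorem \ref{aegi} already supplies that. The only thing one has to be careful about is keeping track of which product symbol ($\n$, $\m$, $\kp$) is used at each stage so that the dimensions line up; this is bookkeeping rather than a genuine obstacle. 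I do not anticipate any hard step.
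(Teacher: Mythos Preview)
Your proposal is correct and matches the paper's own proof essentially verbatim: the paper also derives the corollary from Theorem \ref{aegi} by setting $\mc{R}=\mc{M}$, $\mc{S}=\mc{I}\in{\C}^{J_{1}\times\cdots\times J_{M}\times J_{1}\times\cdots\times J_{M}}$, $\mc{T}=\mc{N}$, and $\mc{A}=\mc{M}\m\mc{N}$. There is nothing to add.
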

\begin{proof} This follows from Theorem \ref{aegi}, by setting $\mc{R}=\mc{M}$, $\mc{S}=\mc{I}\in{\C}^{J_{1}\times\cdots\times J_{M}\times J_{1}\times \cdots\times J_{M}}$, $\mc{T}=\mc{N}$ and $\mc{A}=\mc{M}\n\mc{N}$.
\end{proof}

We observed that $\mc{P}^{\dg}=\mc{P}$ for a Hermitian idempotent
tensor $\mc{P}$. Combining this fact with the expression for
$(\mc{M}\n\mc{N})^{\dg}$ in Corollary \ref{caegi} gives the
following result.
\begin{corollary}
Let $\mc{M}\in{\C}^{I_{1}\times\cdots\times I_{N}\times I_{1}\times
\cdots\times I_{N}}$ and $\mc{N}\in{\C}^{I_{1}\times\cdots\times
I_{N}\times I_{1}\times \cdots\times I_{N}}$ be any two Hermitian
idempotent tensors, then $(\mc{M}\n\mc{N})^{\dg}$ is idempotent.
\end{corollary}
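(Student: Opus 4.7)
The plan is to invoke Corollary \ref{caegi} directly with the two given Hermitian idempotent tensors playing the roles of $\mc{M}$ and $\mc{N}$ there, and then to simplify both factors on the right-hand side using the hypothesis. Since $\mc{M}$ and $\mc{N}$ lie in ${\C}^{I_{1}\times\cdots\times I_{N}\times I_{1}\times\cdots\times I_{N}}$, every Einstein product in the corollary collapses to $\n$, and the corollary yields
$$(\mc{M}\n\mc{N})^{\dg}=(\mc{M}^{\dg}\n\mc{M}\n\mc{N})^{\dg}\n(\mc{M}\n\mc{N}\n\mc{N}^{\dg})^{\dg}.$$

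The key simplification is that the unnamed lemma preceding Lemma \ref{ul} gives $\mc{M}^{\dg}=\mc{M}$ and $\mc{N}^{\dg}=\mc{N}$, because $\mc{M}$ and $\mc{N}$ are Hermitian idempotent. Combining this with the idempotency relations $\mc{M}\n\mc{M}=\mc{M}$ and $\mc{N}\n\mc{N}=\mc{N}$, the first factor becomes $(\mc{M}\n\mc{M}\n\mc{N})^{\dg}=(\mc{M}\n\mc{N})^{\dg}$, and similarly the second factor equals $(\mc{M}\n\mc{N})^{\dg}$. Setting $\mc{A}=\mc{M}\n\mc{N}$, we obtain
$$\mc{A}^{\dg}=\mc{A}^{\dg}\n\mc{A}^{\dg},$$
which is precisely the statement that $(\mc{M}\n\mc{N})^{\dg}$ is idempotent.

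I anticipate no real obstacle: once Corollary \ref{caegi} is in hand, the proof is an almost immediate substitution. The only minor bookkeeping step is to verify that the corollary indeed specializes to Einstein products of type $\n$ on both sides when the dimensions match, so that the expression $(\mc{M}^{\dg}\n\mc{M}\n\mc{N})^{\dg}\n(\mc{M}\n\mc{N}\n\mc{N}^{\dg})^{\dg}$ is well-defined and that the two inner simplifications $\mc{M}^{\dg}\n\mc{M}\n\mc{N}=\mc{M}\n\mc{N}$ and $\mc{M}\n\mc{N}\n\mc{N}^{\dg}=\mc{M}\n\mc{N}$ are dimension-consistent. Both checks are automatic given that all tensors live in the same square space.
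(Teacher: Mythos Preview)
Your proposal is correct and follows exactly the approach the paper indicates: it applies Corollary \ref{caegi}, uses $\mc{M}^{\dg}=\mc{M}$ and $\mc{N}^{\dg}=\mc{N}$ from the Hermitian idempotent lemma, and simplifies both factors to $(\mc{M}\n\mc{N})^{\dg}$ to obtain idempotency. No gaps.
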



\subsection{Relation between $\mc{A}_{\pi\dg}$ and $\mc{A}^{\dg}$}

Let $\mc{A}=\mc{R}\kp\mc{S}\lp\mc{T}\in {\C}^{I_{1}\times
\cdots\times I_{N}\times J_{1}\times\cdots \times J_{M}}$, where
$\mc{R}\in {\C}^{I_{1}\times \cdots\times I_{N}\times
H_{1}\times\cdots \times H_{K}}$, $\mc{S}\in {\C}^{H_{1}\times
\cdots\times H_{K}\times G_{1}\times\cdots \times G_{L}}$ and
$\mc{T}\in {\C}^{G_{1}\times\cdots \times G_{L}\times
J_{1}\times\cdots \times J_{M}}$. Let us also denote
$\mc{T}^{\dg}\lp(\mc{A}\m\mc{T}^{\dg})^{\dg}$ and
$(\mc{R}^{\dg}\n\mc{A})^{\dg}\kp\mc{R}^{\dg}$ as $\mc{B}$ and
$\mc{C}$, respectively. With these notations, Theorem \ref{aepgi}
and Theorem \ref{aegi} result
\begin{equation}\label{Eepgi}
    \mc{A}_{\pi \dg}=\mc{B}\n\mc{A}\m\mc{C},
\end{equation}
 and \begin{equation}\label{Egi}
  ~~ \mc{A}^{\dg}=\mc{C}\n\mc{A}\m\mc{B}.
\end{equation}
Also,
\begin{equation}\label{eu}
    \mc{A}_{\pi\dg}\n\mc{A}\m\mc{A}^{\dg}=\mc{B},
\end{equation}
and
\begin{equation}\label{ev}
\mc{A}^{\dg}\n\mc{A}\m\mc{A}_{\pi\dg}=\mc{C},
\end{equation}
follow from Theorem \ref{aepgi} and Lemma \ref{ir}. In the next
result, we represent $\mc{B}$ and $\mc{C}$ as product Moore-Penrose
inverse.
\begin{theorem}\label{Tapgi}
Let $\mc{A}=\mc{R}\kp\mc{S}\lp\mc{T}\in {\C}^{I_{1}\times
\cdots\times I_{N}\times J_{1}\times\cdots \times J_{M}}$, where
$\mc{R}\in {\C}^{I_{1}\times \cdots\times I_{N}\times
H_{1}\times\cdots \times H_{K}}$, $\mc{S}\in {\C}^{H_{1}\times
\cdots\times H_{K}\times G_{1}\times\cdots \times G_{L}}$ and
$\mc{T}\in {\C}^{G_{1}\times\cdots \times G_{L}\times
J_{1}\times\cdots \times J_{M}}$.
\begin{itemize}
\item[(i)] If $\mc{A}=\mc{A}\m(\mc{A}^{\dg}\n\mc{A}\m\mc{T}^{\dg})\lp\mc{T}$, then $\mc{A}_{\pi\dg}=\mc{B}$.
\item[(ii)] If $\mc{A}=\mc{R}\kp(\mc{R}^{\dg}\n\mc{A}\m\mc{A}^{\dg})\n\mc{A}$, then $\mc{A}_{\pi\dg}=\mc{C}$.
\end{itemize}
\end{theorem}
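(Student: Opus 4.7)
My plan is to verify that under the hypothesis of each part, the candidate ($\mc{B}$ for (i), $\mc{C}$ for (ii)) satisfies the characterization of $\mc{A}_{\pi\dg}$ given in Theorem~\ref{aepgi}: the product Moore--Penrose inverse is the unique tensor $\mc{X}$ satisfying $\mc{X}\n\mc{A}\m\mc{X}=\mc{X}$, $\mc{A}\m\mc{X}=\mc{A}\m\mc{C}$, and $\mc{X}\n\mc{A}=\mc{B}\n\mc{A}$. By the uniqueness supplied by Lemma~\ref{ul}, it suffices to check these three equations with $\mc{X}=\mc{B}$ in part (i) and with $\mc{X}=\mc{C}$ in part (ii).

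For part (i) with $\mc{X}=\mc{B}$, the third equation is tautological. For the idempotency, I invoke equation~\eqref{Eaa+} to write $\mc{A}\m\mc{B}=\mc{A}\m\mc{A}^{\dg}$, and then apply Lemma~\ref{ir} with $\mc{P}=\mc{A}\m\mc{A}^{\dg}$ (Hermitian) and $\mc{Q}=\mc{A}\m\mc{T}^{\dg}$ (noting $\mc{P}\n\mc{Q}=\mc{Q}$) to obtain $(\mc{A}\m\mc{T}^{\dg})^{\dg}\n\mc{A}\m\mc{A}^{\dg}=(\mc{A}\m\mc{T}^{\dg})^{\dg}$. This yields $\mc{B}\n\mc{A}\m\mc{A}^{\dg}=\mc{B}$, and therefore $\mc{B}\n\mc{A}\m\mc{B}=\mc{B}\n\mc{A}\m\mc{A}^{\dg}=\mc{B}$.

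The crux is the middle equation $\mc{A}\m\mc{B}=\mc{A}\m\mc{C}$, where the hypothesis must enter. My plan is to read $\mc{A}=\mc{A}\m(\mc{A}^{\dg}\n\mc{A}\m\mc{T}^{\dg})\lp\mc{T}$ as exhibiting an alternative factorization of $\mc{A}$ in which $\mc{A}$ itself plays the role of the left factor. Substituting $\mc{A}$ for $\mc{R}$ in the definition of $\mc{C}$, the piece $(\mc{R}^{\dg}\n\mc{A})^{\dg}$ becomes $(\mc{A}^{\dg}\n\mc{A})^{\dg}=\mc{A}^{\dg}\n\mc{A}$ because a Hermitian idempotent equals its own Moore--Penrose inverse, and the whole product collapses via $\mc{A}^{\dg}\n\mc{A}\m\mc{A}^{\dg}=\mc{A}^{\dg}$ to give $\mc{A}\m\mc{A}^{\dg}$, matching $\mc{A}\m\mc{B}$. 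Part (ii) is handled by the mirror-image argument, using~\eqref{Ea+a} in place of~\eqref{Eaa+} and invoking the hypothesis $\mc{A}=\mc{R}\kp(\mc{R}^{\dg}\n\mc{A}\m\mc{A}^{\dg})\n\mc{A}$ to substitute $\mc{A}$ for $\mc{T}$ and verify the analogous identity $\mc{B}\n\mc{A}=\mc{C}\n\mc{A}$.

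The main obstacle is that the substitution $\mc{R}\to\mc{A}$ (respectively $\mc{T}\to\mc{A}$) changes the contractions among the factors, so one must be careful in showing that the resulting expression indeed collapses to $\mc{A}\m\mc{A}^{\dg}$ (respectively $\mc{A}^{\dg}\n\mc{A}$). A cleaner alternative is to combine equation~\eqref{eu}, which unconditionally gives $\mc{B}=\mc{A}_{\pi\dg}\n\mc{A}\m\mc{A}^{\dg}$, with the consequence drawn from the hypothesis that the right-hand side reduces to $\mc{A}_{\pi\dg}$, immediately yielding $\mc{A}_{\pi\dg}=\mc{B}$. Part (ii) succumbs symmetrically via~\eqref{ev}.
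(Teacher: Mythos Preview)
Your proposal is correct and follows essentially the same route as the paper. Both arguments read the hypothesis as specifying a new factorization with $\mc{R}'=\mc{A}$, $\mc{S}'=\mc{A}^{\dg}\n\mc{A}\m\mc{T}^{\dg}$, $\mc{T}'=\mc{T}$, then invoke Theorem~\ref{aepgi} for that factorization and simplify via $(\mc{A}^{\dg}\n\mc{A})^{\dg}=\mc{A}^{\dg}\n\mc{A}$ and Lemma~\ref{ir}; the only cosmetic difference is that the paper plugs into the explicit formula $\mc{A}_{\pi\dg}=\mc{T}^{\dg}\lp(\mc{A}\m\mc{T}^{\dg})^{\dg}\n\mc{A}\m(\mc{R}^{\dg}\n\mc{A})^{\dg}\kp\mc{R}^{\dg}$ and collapses it to $\mc{B}$, whereas you verify the three characterizing conditions for $\mc{X}=\mc{B}$.
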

\begin{proof} $(i)$ Applying Theorem \ref{aepgi} to $\mc{A}\m(\mc{A}^{\dg}\n\mc{A}\m\mc{T}^{\dg})\lp\mc{T}=\mc{A}\m\mc{T}^{\dg}\lp\mc{T}$ results
\begin{multline*}
[\mc{A}\m(\mc{A}^{\dg}\n\mc{A}\m\mc{T}^{\dg})\lp\mc{T}]_{\pi\dg}=[\mc{A}\m\mc{T}^{\dg}\lp\mc{T}]_{\pi\dg}\\
=\mc{T}^{\dg}\lp(\mc{A}\m\mc{T}^{\dg}\lp\mc{T}\m\mc{T}^{\dg})^{\dg}\n\mc{A}\\
\m\mc{T}^{\dg}\lp\mc{T}\m(\mc{A}^{\dg}\n\mc{A}\m\mc{T}^{\dg}\lp\mc{T})^{\dg}\m\mc{A}^{\dg},
\end{multline*}
which reduces to
\begin{eqnarray*}
[\mc{A}\m(\mc{A}^{\dg}\n\mc{A}\m\mc{T}^{\dg})\lp\mc{T}]_{\pi\dg}&=&\mc{T}^{\dg}\lp(\mc{A}\m\mc{T}^{\dg})^{\dg}\n\mc{A}\m(\mc{A}^{\dg}\n\mc{A})^{\dg}\m\mc{A}^{\dg},
\end{eqnarray*}
due to equation \eqref{pgipe1}. Since $\mc{A}^{\dg}\n\mc{A}$ is
Hermitian, we have
$$[\mc{A}\m(\mc{A}^{\dg}\n\mc{A}\m\mc{T}^{\dg})\lp\mc{T}]_{\pi\dg}=\mc{T}^{\dg}\lp(\mc{A}\m\mc{T}^{\dg})^{\dg}=\mc{B},$$
by Lemma \ref{ir}.\\
$(ii)$ Similarly, we have
$\mc{C}=[\mc{R}\kp(\mc{R}^{\dg}\n\mc{A}\m\mc{A}^{\dg})\n\mc{A}]_{\pi\dg}.$
\end{proof}
Then using  Theorem \ref{Tpgi} and Theorem \ref{Tapgi}, it is easy
to show that $\mc{B}\n\mc{A}\m\mc{B}=\mc{B}$ and
$\mc{C}\n\mc{A}\m\mc{C}=\mc{C}$.
We next state two results on $\mc{B}$ and $\mc{C}$ with the help of
the expressions for
$\mc{A}\m\mc{B},~\mc{B}\n\mc{A},~\mc{A}\m\mc{C},$ and
$\mc{C}\n\mc{A}$ obtained from equation \eqref{eu} and equation
\eqref{ev}.
\begin{remark}\label{CaepgiU}
The system of tensor equations $\mc{X}\n\mc{A}\m\mc{X}=\mc{X}$,
$\mc{A}\m\mc{X}=\mc{A}\m\mc{A}^{\dg}$ and
$\mc{X}\n\mc{A}=\mc{A}_{\pi \dg}\n\mc{A}$
($\mc{A}\m\mc{X}=\mc{A}\m\mc{A}_{\pi \dg}$ and
$\mc{X}\n\mc{A}=\mc{A}^{\dg}\n\mc{A}$) has unique solution
$\mc{X}=\mc{B}$ $(\mc{X}=\mc{C})$.
\end{remark}

 In terms of $\mc{B}$ and $\mc{C}$, Theorem \ref{aepgi}, the equations \eqref{Ea+a} and \eqref{Eaa+}  can be rewritten as following.
\begin{remark}
$\mc{X}=\mc{A}^{\dg}(\mc{X}=\mc{A}_{\pi\dg})$ is the unique matrix
such that $\mc{X}\n\mc{A}\m\mc{X}=\mc{X}$,
$\mc{A}\m\mc{X}=\mc{A}\m\mc{B}$ and $\mc{X}\n\mc{A}=\mc{C}\n\mc{A}$
($\mc{A}\m\mc{X}=\mc{A}\m\mc{C}$ and
$\mc{X}\n\mc{A}=\mc{B}\n\mc{A}$).
\end{remark}

The next example confirms that  any two of
$\mc{A}^{\dg},~\mc{A}_{\pi\dg},~\mc{B}$ and $\mc{C}$ need not
coincide for a given factorization
$\mc{A}=\mc{R}\kp\mc{S}\lp\mc{T}\in {\C}^{I_{1}\times \cdots\times
I_{N}\times J_{1}\times\cdots \times J_{M}}$, where $\mc{R}\in
{\C}^{I_{1}\times \cdots\times I_{N}\times H_{1}\times\cdots \times
H_{K}}$, $\mc{S}\in {\C}^{H_{1}\times \cdots\times H_{K}\times
G_{1}\times\cdots \times G_{L}}$ and $\mc{T}\in
{\C}^{G_{1}\times\cdots \times G_{L}\times J_{1}\times\cdots \times
J_{M}}$.
\begin{example}
Consider the tensor $\mc{A}=(a_{ijkl})_{1\leq
i,j,k,l\leq2}\in{\C}^{2\times2\times2\times 2}$, such that

\vspace{0.5cm} \noindent
$$
a_{ij11}=
        \begin{pmatrix}
              1 & 0\\
              0 & -1
        \end{pmatrix},~
a_{ij12}=
        \begin{pmatrix}
              0 & -1\\
              0 & 0
        \end{pmatrix},~
a_{ij21}=
        \begin{pmatrix}
             0 & 1\\
             0 & -1
        \end{pmatrix},~
a_{ij22}=
        \begin{pmatrix}
            0 & 1\\
            0 & 0
        \end{pmatrix}.
$$

Suppose that $\mc{A}$ is factorized as  $\mc{A}=\mc{R}\2\mc{S}\2
\mc{T}$, where $\mc{R}=(r_{ijkl})\in{\C}^{2\times2\times2\times
2},~\mc{S}=(s_{ijkl})\in{\C}^{2\times2\times2\times
2},~\mc{T}=(t_{ijkl})\in{\C}^{2\times2\times2\times 2},~1\leq
i,j,k,l\leq2$ such that

\vspace{0.5cm} \noindent
$$
r_{ij11}=
         \begin{pmatrix}
                  0 & 1\\
                  1 & 0
         \end{pmatrix},~
r_{ij12}=
         \begin{pmatrix}
                      -1 & 0\\
                       1 & 0
          \end{pmatrix},~
r_{ij21}=
          \begin{pmatrix}
                       0 & 1\\
                       0 & 1
          \end{pmatrix},~
r_{ij22}=
        \begin{pmatrix}
                     0 & 0\\
                     1 & 1
        \end{pmatrix};
$$

\vspace{0.5cm} \noindent
$$
s_{ij11}=
        \begin{pmatrix}
                     1 & 0\\
                     1 & 0
        \end{pmatrix},~
s_{ij12}=
        \begin{pmatrix}
                     0 & -1\\
                     0 & 1
        \end{pmatrix},~
s_{ij21}=
        \begin{pmatrix}
                     1 & 0\\
                     0 & 0
        \end{pmatrix},~
s_{ij22}=
        \begin{pmatrix}
                     0 & 0\\
                     1 & 0
        \end{pmatrix};
$$

\vspace{0.5cm} \noindent and
$$
t_{ij11}=
        \begin{pmatrix}
                     1 & 0\\
                     0 & 0
        \end{pmatrix},~
t_{ij12}=
        \begin{pmatrix}
                     0 & 0\\
                     1 & 0
        \end{pmatrix},~
t_{ij21}=
        \begin{pmatrix}
                     0 & 1\\
                     0 & 0
        \end{pmatrix},~
t_{ij22}=
        \begin{pmatrix}
                     0 & 0\\
                     1 & -1
        \end{pmatrix}.
$$

\vspace{0.5cm} \noindent Now, $\mc{A}^{\dg}=(a'_{ijkl})_{1\leq
i,j,k,l\leq2}\in{\C}^{2\times2\times2\times 2}$,
$\mc{R}^{\dg}=(r'_{ijkl})_{1\leq
i,j,k,l\leq2}\in{\C}^{2\times2\times2\times 2}$,
$\mc{S}^{\dg}=(s'_{ijkl})_{1\leq
i,j,k,l\leq2}\in{\C}^{2\times2\times2\times 2}$, and
$\mc{T}^{\dg}=(t'_{ijkl})_{1\leq
i,j,k,l\leq2}\in{\C}^{2\times2\times2\times 2}$ are given by

\vspace{0.5cm} \noindent
$$
a'_{ij11}=
         \begin{pmatrix}
                      1 & -1\slash 2\\
                      -1 & 1\slash2
        \end{pmatrix},~
a'_{ij12}=
         \begin{pmatrix}
                      0&-1\slash 2\\
                      0 & 1\slash2
        \end{pmatrix},~
a'_{ij21}=
         \begin{pmatrix}
                      0 & 0\\
                      0 & 0
        \end{pmatrix},~
a'_{ij22}=
         \begin{pmatrix}
                      0 & -1\slash 2\\
                      -1 & 1\slash2
        \end{pmatrix};
$$

\vspace{0.5cm} \noindent
$$
r'_{ij11}=
         \begin{pmatrix}
                      1 & -1\\
                      -1 & 1
         \end{pmatrix},~
r'_{ij12}=
         \begin{pmatrix}
                      0 & 0\\
                      1 & -1
        \end{pmatrix},~
r'_{ij21}=
         \begin{pmatrix}
                      1 & 0\\
                      -1 & 1
         \end{pmatrix},~
r'_{ij22}=
         \begin{pmatrix}
                      0 & 0\\
                      0 & 1
         \end{pmatrix};
$$

\vspace{0.5cm} \noindent
$$
s'_{ij11}=
         \begin{pmatrix}
                      1\slash3 & 0\\
                      2\slash3 & -1\slash3
         \end{pmatrix},~
s'_{ij12}=
         \begin{pmatrix}
                      0 & -1\slash2\\
                      0 & 0
        \end{pmatrix},~
s'_{ij21}=
         \begin{pmatrix}
                      1\slash3 & 0\\
                      -1\slash3 & 2\slash3
         \end{pmatrix},~
s'_{ij22}=
         \begin{pmatrix}
                      0 & 1\slash2\\
                      0 & 0
         \end{pmatrix};
$$

\vspace{0.5cm} \noindent and
$$
t'_{ij11}=
         \begin{pmatrix}
                      1 & 0\\
                      0 & 0
         \end{pmatrix},~
t'_{ij12}=
         \begin{pmatrix}
                      0 & 0\\
                      1 & 0
         \end{pmatrix},~
t'_{ij21}=
         \begin{pmatrix}
                      0 & 1\\
                      0 & 1
         \end{pmatrix},~
t'_{ij22}=
         \begin{pmatrix}
                      0 & 1\\
                      0 & -1
         \end{pmatrix}.
$$

\vspace{0.2cm} \noindent
 Here, $\mc{R}^{\dg}\2\mc{A}\2\mc{T}^{\dg}=\mc{S}$, then  $\mc{A}_{\pi\dg}=\mc{T}^{\dg}\2(\mc{R}^{\dg}\2\mc{A}\2\mc{T}^{\dg})^{\dg}\2\mc{R}^{\dg}=\mc{T}^{\dg}\2\mc{S}^{\dg}\2\mc{R}^{\dg}=(x_{ijkl})$,  $\mc{B}=\mc{A}_{\pi\dg}\2\mc{A}\2\mc{A}^{\dg}=(u_{ijkl})$, and $\mc{C}=\mc{A}^{\dg}\2\mc{A}\2\mc{A}_{\pi\dg}=(v_{ijkl})$, where

\vspace{0.5cm} \noindent
 $$
 x_{ij11}=
         \begin{pmatrix}
                      1 & -1\slash3\\
                      -1 & 2\slash3
         \end{pmatrix},~
x_{ij12}=
         \begin{pmatrix}
                      0 & -1\slash3\\
                      0 & 2\slash3
         \end{pmatrix},~
x_{ij21}=
        \begin{pmatrix}
                     0 & 0\\
                     -1\slash2 & 1\slash2
        \end{pmatrix},~
x_{ij22}=
        \begin{pmatrix}
                     0 & 0\\
                     -1 & 1
        \end{pmatrix};
$$

\vspace{0.5cm} \noindent
$$
 u_{ij11}=
         \begin{pmatrix}
                      1 & -1\slash2\\
                      -1 & 1\slash2
         \end{pmatrix},~
u_{ij12}=
        \begin{pmatrix}
                     0 & -1\slash2\\
                     0 & 1\slash2
        \end{pmatrix},
u_{ij21}=
        \begin{pmatrix}
                     0 & -1\slash4\\
                     -1\slash2 & 1\slash4
        \end{pmatrix},~
u_{ij22}=
        \begin{pmatrix}
                     0 & -1\slash2\\
                     -1 & 1\slash2
        \end{pmatrix};
 $$

\vspace{0.5cm} \noindent and
$$
v_{ij11}=
        \begin{pmatrix}
                     1 & -1\slash3\\
                     -1 & 2\slash3
        \end{pmatrix},~
v_{ij12}=
        \begin{pmatrix}
                     0 & -1\slash3\\
                     0 & 2\slash3
        \end{pmatrix},~
v_{ij21}=
        \begin{pmatrix}
                     0 & 0\\
                     0 & 0
        \end{pmatrix},~~and~~
v_{ij22}=
        \begin{pmatrix}
                     0 & 0\\
                     -1 & 1
        \end{pmatrix}.
$$
\end{example}
At this point one may be interested to know when does the
\textit{product Moore-Penrose inverse} coincide with the
\textit{Moore-Penrose inverse}? The answer to this question is
explained in the following theorem.
\begin{theorem}\label{Tnscpgi}
Let $\mc{A}=\mc{R}\kp\mc{S}\lp\mc{T}\in {\C}^{I_{1}\times
\cdots\times I_{N}\times J_{1}\times\cdots \times J_{M}}$, where
$\mc{R}\in {\C}^{I_{1}\times \cdots\times I_{N}\times
H_{1}\times\cdots \times H_{K}}$, $\mc{S}\in {\C}^{H_{1}\times
\cdots\times H_{K}\times G_{1}\times\cdots \times G_{L}}$ and
$\mc{T}\in {\C}^{G_{1}\times\cdots \times G_{L}\times
J_{1}\times\cdots \times J_{M}}$. Then
$\mc{A}_{\pi\dg}=\mc{A}^{\dg}$ if and only if
$(\mc{R}^{\dg}\n\mc{A}\m
\mc{T}^{\dg})^{\dg}=\mc{T}\m\mc{A}^{\dg}\n\mc{R}$. In that case
$\mc{A}_{\pi\dg}=\mc{A}^{\dg}=\mc{B}=\mc{C}$.
\end{theorem}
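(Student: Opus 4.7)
The plan is to exploit the closed-form expression \eqref{epgi} for $\mc{A}_{\pi\dg}$ together with the intermediate identity \eqref{pgi} that already appears inside the proof of Theorem \ref{Tpgi}, namely
$$(\mc{R}^{\dg}\n\mc{A}\m\mc{T}^{\dg})^{\dg}=\mc{T}\m\mc{A}_{\pi\dg}\n\mc{R}.$$
That single relation, combined with two applications of Lemma \ref{ir} to the Hermitian idempotents $\mc{R}\kp\mc{R}^{\dg}$ and $\mc{T}^{\dg}\lp\mc{T}$, essentially handles both implications.

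For the forward direction, assuming $\mc{A}_{\pi\dg}=\mc{A}^{\dg}$, I would simply substitute $\mc{A}^{\dg}$ for $\mc{A}_{\pi\dg}$ in the identity above and read off the required condition $(\mc{R}^{\dg}\n\mc{A}\m\mc{T}^{\dg})^{\dg}=\mc{T}\m\mc{A}^{\dg}\n\mc{R}$. This direction is therefore immediate once one notices that equation \eqref{pgi} has already done the work inside the proof of Theorem \ref{Tpgi}.

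For the reverse direction, assuming the proposed condition, I would plug it into \eqref{epgi} to obtain
$$\mc{A}_{\pi\dg}=\mc{T}^{\dg}\lp(\mc{R}^{\dg}\n\mc{A}\m\mc{T}^{\dg})^{\dg}\kp\mc{R}^{\dg}=\mc{T}^{\dg}\lp\mc{T}\m\mc{A}^{\dg}\n\mc{R}\kp\mc{R}^{\dg}.$$
The Hermitian tensor $\mc{R}\kp\mc{R}^{\dg}$ satisfies $\mc{R}\kp\mc{R}^{\dg}\n\mc{A}=\mc{A}$ by \eqref{pgipe1}, so Lemma \ref{ir} yields $\mc{A}^{\dg}\n\mc{R}\kp\mc{R}^{\dg}=\mc{A}^{\dg}$. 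Likewise, $\mc{A}\m\mc{T}^{\dg}\lp\mc{T}=\mc{A}$ together with the fact that $\mc{T}^{\dg}\lp\mc{T}$ is Hermitian yields $\mc{T}^{\dg}\lp\mc{T}\m\mc{A}^{\dg}=\mc{A}^{\dg}$. Chaining these two simplifications collapses the right-hand side to $\mc{A}^{\dg}$, giving $\mc{A}_{\pi\dg}=\mc{A}^{\dg}$.

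Finally, to establish $\mc{A}_{\pi\dg}=\mc{A}^{\dg}=\mc{B}=\mc{C}$ in that case, I would invoke equations \eqref{eu} and \eqref{ev}: substituting $\mc{A}_{\pi\dg}=\mc{A}^{\dg}$ into $\mc{B}=\mc{A}_{\pi\dg}\n\mc{A}\m\mc{A}^{\dg}$ and $\mc{C}=\mc{A}^{\dg}\n\mc{A}\m\mc{A}_{\pi\dg}$ gives $\mc{B}=\mc{C}=\mc{A}^{\dg}\n\mc{A}\m\mc{A}^{\dg}=\mc{A}^{\dg}$ by the Moore-Penrose axiom $\mc{X}\n\mc{A}\m\mc{X}=\mc{X}$. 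I do not anticipate a serious technical obstacle; the entire argument reduces to one substitution plus two applications of Lemma \ref{ir}, and the real insight is recognizing that equation \eqref{pgi}, buried inside the proof of Theorem \ref{Tpgi}, is the exact bridge connecting $\mc{A}_{\pi\dg}$ to the proposed condition.
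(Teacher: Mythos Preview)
Your proposal is correct and follows essentially the same route as the paper: both use \eqref{epgi} together with Lemma \ref{ir} (applied to the Hermitian idempotents $\mc{R}\kp\mc{R}^{\dg}$ and $\mc{T}^{\dg}\lp\mc{T}$) for the implication from the condition to $\mc{A}_{\pi\dg}=\mc{A}^{\dg}$, and both derive $\mc{B}=\mc{C}=\mc{A}^{\dg}$ from \eqref{eu}, \eqref{ev}. Your only (cosmetic) shortcut is citing \eqref{pgi} directly for the converse, whereas the paper re-expands $\mc{T}\m\mc{A}_{\pi\dg}\n\mc{R}$ via \eqref{epgi} and collapses it with Lemma \ref{ir} in the chain \eqref{Enscpgi}; the content is identical.
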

\begin{proof}
Suppose
$(\mc{R}^{\dg}\n\mc{A}\m\mc{T}^{\dg})^{\dg}=\mc{T}\m\mc{A}^{\dg}\n\mc{R}$,
then by equation \eqref{epgi} and Lemma \ref{ir}, we have
\begin{eqnarray*}
\mc{A}_{\pi\dg}&=&\mc{T}^{\dg}\lp(\mc{R}^{\dg}\n\mc{A}\m\mc{T}^{\dg})^{\dg}\kp\mc{R}^{\dg}\\
&=&\mc{T}^{\dg}\lp\mc{T}\m\mc{A}^{\dg}\n\mc{R}\kp\mc{R}^{\dg}\\
&=&\mc{A}^{\dg}.
\end{eqnarray*}
And using equation \eqref{eu} and \eqref{ev}, we have
$\mc{B}=\mc{A}_{\pi\dg}\n\mc{A}\m\mc{A}^{\dg}=\mc{A}^{\dg}\n\mc{A}\m\mc{A}^{\dg}=\mc{A}^{\dg}=\mc{A}^{\dg}\n\mc{A}\m\mc{A}^{\dg}=\mc{A}^{\dg}\n\mc{A}\m\mc{A}_{\pi\dg}=\mc{C}.$

Conversely, suppose that $\mc{A}_{\pi\dg}=\mc{A}^{\dg}$. By using
equation \eqref{pgipe1} and Lemma \ref{ir}, we then have
\begin{align}
    \mc{T}\m\mc{A}^{\dg}\n\mc{R}&=\mc{T}\m\mc{A}_{\pi\dg}\n\mc{R}\nonumber\\
    &=\mc{T}\m\mc{T}^{\dg}\lp(\mc{R}^{\dg}\n\mc{A}\m\mc{T}^{\dg})^{\dg}\kp\mc{R}^{\dg}\n\mc{R}\label{Enscpgi}\\
    &=\mc{T}\m\mc{T}^{\dg}\lp(\mc{R}^{\dg}\n\mc{A}\m\mc{T}^{\dg})^{\dg}\nonumber\\
    &=(\mc{R}^{\dg}\n\mc{A}\m\mc{T}^{\dg})^{\dg}\nonumber.
\end{align}

\end{proof}

From the proof of Theorem \ref{Tnscpgi}, it is clear that
$\mc{A}_{\pi\dg}=\mc{A}^{\dg}$ implies $\mc{B}=\mc{C}$. However, the
converse is also true. Because if $\mc{B}=\mc{C}$, then the relation
$\mc{B}\n\mc{A}\m\mc{B}=\mc{B}$ with equation \eqref{Eepgi} and
equation \eqref{Egi} results $\mc{B}=\mc{A}_{\pi\dg}=\mc{A}^{\dg}$.
Thus, Theorem \ref{Tnscpgi} also can be restated as follows.

\begin{remark}
Let $\mc{A}=\mc{R}\kp\mc{S}\lp\mc{T}\in {\C}^{I_{1}\times
\cdots\times I_{N}\times J_{1}\times\cdots \times J_{M}}$, where
$\mc{R}\in {\C}^{I_{1}\times \cdots\times I_{N}\times
H_{1}\times\cdots \times H_{K}}$, $\mc{S}\in {\C}^{H_{1}\times
\cdots\times H_{K}\times G_{1}\times\cdots \times G_{L}}$ and
$\mc{T}\in {\C}^{G_{1}\times\cdots \times G_{L}\times
J_{1}\times\cdots \times J_{M}}$. Then,
$\mc{A}_{\pi\dg}=\mc{A}^{\dg}$ if and only if $\mc{B}=\mc{C}$, in
which case $\mc{A}_{\pi\dg}=\mc{A}^{\dg}=\mc{B}=\mc{C}$.
\end{remark}

One may be interested on the case that either of $\mc{B}$ or
$\mc{C}$ coincides with $\mc{A}^{\dg}$ but $\mc{B}\neq\mc{C}$. The
answer is shown below.
\begin{theorem}
Let $\mc{A}=\mc{R}\kp\mc{S}\lp\mc{T}\in {\C}^{I_{1}\times
\cdots\times I_{N}\times J_{1}\times\cdots \times J_{M}}$, where
$\mc{R}\in {\C}^{I_{1}\times \cdots\times I_{N}\times
H_{1}\times\cdots \times H_{K}}$, $\mc{S}\in {\C}^{H_{1}\times
\cdots\times H_{K}\times G_{1}\times\cdots \times G_{L}}$ and
$\mc{T}\in {\C}^{G_{1}\times\cdots \times G_{L}\times
J_{1}\times\cdots \times J_{M}}$. Then,
$\mc{B}=\mc{A}^{\dg}(\mc{B}=\mc{A}_{\pi\dg})$ if and only if
$\mc{C}=\mc{A}_{\pi\dg}(\mc{C}=\mc{A}^{\dg})$.
\end{theorem}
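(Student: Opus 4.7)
My plan is to leverage the four symmetric identities collected just before this theorem, namely \eqref{Eepgi}, \eqref{Egi}, \eqref{eu}, and \eqref{ev}, which give
\begin{equation*}
\mc{A}_{\pi\dg}=\mc{B}\n\mc{A}\m\mc{C},\quad \mc{A}^{\dg}=\mc{C}\n\mc{A}\m\mc{B},\quad \mc{B}=\mc{A}_{\pi\dg}\n\mc{A}\m\mc{A}^{\dg},\quad \mc{C}=\mc{A}^{\dg}\n\mc{A}\m\mc{A}_{\pi\dg},
\end{equation*}
together with the two ``inner inverse'' identities $\mc{A}^{\dg}\n\mc{A}\m\mc{A}^{\dg}=\mc{A}^{\dg}$ (the second Moore--Penrose axiom in Definition \ref{defmpi}) and $\mc{A}_{\pi\dg}\n\mc{A}\m\mc{A}_{\pi\dg}=\mc{A}_{\pi\dg}$ (equation \eqref{pgi2}). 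Under these identities each of the four implications in the statement should reduce to a short telescoping argument in which a five-factor product collapses to three factors by replacing a central $\mc{X}\n\mc{A}\m\mc{X}$ by $\mc{X}$.

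For the forward direction of the first equivalence, $\mc{B}=\mc{A}^{\dg}\Rightarrow\mc{C}=\mc{A}_{\pi\dg}$, I will substitute the hypothesis into \eqref{Eepgi} to rewrite $\mc{A}_{\pi\dg}=\mc{A}^{\dg}\n\mc{A}\m\mc{C}$, and then feed this expression into \eqref{ev} to produce $\mc{C}=\mc{A}^{\dg}\n\mc{A}\m\mc{A}^{\dg}\n\mc{A}\m\mc{C}$; telescoping the leading three factors by the Moore--Penrose axiom yields $\mc{C}=\mc{A}^{\dg}\n\mc{A}\m\mc{C}=\mc{A}_{\pi\dg}$. The converse $\mc{C}=\mc{A}_{\pi\dg}\Rightarrow\mc{B}=\mc{A}^{\dg}$ is the mirror image: substitute into \eqref{Egi} to get $\mc{A}^{\dg}=\mc{A}_{\pi\dg}\n\mc{A}\m\mc{B}$, insert into \eqref{eu}, and telescope via \eqref{pgi2}.

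The parenthetical equivalence $\mc{B}=\mc{A}_{\pi\dg}\Leftrightarrow\mc{C}=\mc{A}^{\dg}$ follows from the same template after swapping the roles of $\mc{A}^{\dg}$ and $\mc{A}_{\pi\dg}$: for the forward implication, substitute $\mc{B}=\mc{A}_{\pi\dg}$ into \eqref{Egi} to obtain $\mc{A}^{\dg}=\mc{C}\n\mc{A}\m\mc{A}_{\pi\dg}$, plug this into \eqref{ev}, and collapse $\mc{C}=\mc{C}\n\mc{A}\m\mc{A}_{\pi\dg}\n\mc{A}\m\mc{A}_{\pi\dg}$ using \eqref{pgi2}; for the reverse, substitute $\mc{C}=\mc{A}^{\dg}$ into \eqref{Eepgi}, feed the result into \eqref{eu}, and collapse using the Moore--Penrose axiom. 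I do not foresee a genuine obstacle here; the one delicate point is checking that the associativity of the mixed Einstein products $\n$ and $\m$ permits regrouping the five factors so that the central triple telescopes correctly, but this is exactly the manipulation the paper has already exploited repeatedly (for instance in Theorem \ref{aepgi} and Theorem \ref{aegi}).
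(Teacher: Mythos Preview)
Your proposal is correct. All four implications collapse exactly as you describe: each hypothesis, fed into one of \eqref{Eepgi} or \eqref{Egi}, produces a three-factor expression, and substituting that into the companion identity \eqref{eu} or \eqref{ev} gives a five-factor product whose central triple telescopes via the inner-inverse axiom for $\mc{A}^{\dg}$ or $\mc{A}_{\pi\dg}$. Associativity of the Einstein products is indeed all the regrouping you need.

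The paper's argument is a little different in flavor. For the forward direction it also starts from \eqref{Eepgi} to obtain $\mc{A}_{\pi\dg}=\mc{A}^{\dg}\n\mc{A}\m\mc{C}$, but then, instead of routing through \eqref{ev} and telescoping, it invokes the explicit formula $\mc{C}=(\mc{R}^{\dg}\n\mc{A})^{\dg}\kp\mc{R}^{\dg}$ (together with Lemma \ref{ir}) to conclude directly that $\mc{A}^{\dg}\n\mc{A}\m\mc{C}=\mc{C}$. For the converse it does not use \eqref{Egi} at all; instead it derives $\mc{A}^{\dg}\n\mc{A}\m\mc{B}=\mc{B}$ by pre-multiplying \eqref{eu}, then appeals to Remark \ref{CaepgiU} (the identity $\mc{A}\m\mc{B}=\mc{A}\m\mc{A}^{\dg}$) to close the loop. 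Your route is more uniform and self-contained, needing only the four collected identities and the two inner-inverse axioms, and it makes the symmetry between the two equivalences completely transparent; the paper's route leans on the concrete representations of $\mc{B}$ and $\mc{C}$ and on the auxiliary Remark, which is a slightly heavier toolkit for the same conclusion.
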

\begin{proof}
Suppose that $\mc{B}=\mc{A}^{\dg}$. By equation \eqref{Eepgi}, we
have  $\mc{A}_{\pi\dg}=\mc{A}^{\dg}\n\mc{A}\m\mc{C}$. But
$\mc{C}=(\mc{R}^{\dg}\n\mc{A})^{\dg}\kp\mc{R}^{\dg}$ results
$\mc{A}_{\pi\dg}=\mc{C}$.

Conversely, suppose that $\mc{C}=\mc{A}_{\pi\dg}$. By equation
\eqref{ev} we get
$\mc{A}_{\pi\dg}=\mc{A}^{\dg}\n\mc{A}\m\mc{A}_{\pi\dg}$.
Pre-multiplying $\mc{A}^{\dg}\n\mc{A}$ to equation \eqref{eu}
results
$\mc{A}^{\dg}\n\mc{A}\m\mc{B}=\mc{A}_{\pi\dg}\n\mc{A}\m\mc{A}^{\dg}=\mc{B}.$
By Remark \ref{CaepgiU}, we have
$\mc{A}\m\mc{B}=\mc{A}\m\mc{A}^{\dg}$. Thus $\mc{A}^{\dg}=\mc{B}$.
\end{proof}

The next result gives a necessary and sufficient condition for
$\mc{A}_{\pi\dg}=\mc{T}^{\dg}\lp\mc{S}^{\dg}\kp\mc{R}^{\dg}$.
\begin{theorem}
Let $\mc{A}=\mc{R}\kp\mc{S}\lp\mc{T}\in {\C}^{I_{1}\times
\cdots\times I_{N}\times J_{1}\times\cdots \times J_{M}}$, where
$\mc{R}\in {\C}^{I_{1}\times \cdots\times I_{N}\times
H_{1}\times\cdots \times H_{K}}$, $\mc{S}\in {\C}^{H_{1}\times
\cdots\times H_{K}\times G_{1}\times\cdots \times G_{L}}$ and
$\mc{T}\in {\C}^{G_{1}\times\cdots \times G_{L}\times
J_{1}\times\cdots \times J_{M}}$. Then,
$\mc{A}_{\pi\dg}=\mc{T}^{\dg}\lp\mc{S}^{\dg}\kp\mc{R}^{\dg}$ if and
only if
$\mc{S}^{\dg}=(\mc{R}^{\dg}\n\mc{A}\m\mc{T}^{\dg})^{\dg}+\mc{Y}$,
where $\mc{Y}\in {\C}^{G_{1}\times\cdots \times G_{L}\times
H_{1}\times\cdots \times H_{K}} $ satisfies
$\mc{T}^{\dg}\lp\mc{Y}\kp\mc{R}^{\dg}=\mc{O}$.
\end{theorem}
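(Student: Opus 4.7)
The plan is to read this statement as a linear equation for the unknown $\mc{S}^{\dg}$ (or equivalently, for the difference $\mc{Y}:=\mc{S}^{\dg}-(\mc{R}^{\dg}\n\mc{A}\m\mc{T}^{\dg})^{\dg}$) and reduce it to the already-established closed form of $\mc{A}_{\pi\dg}$. The central ingredient is equation \eqref{epgi} in Theorem \ref{Tpgi}, which gives the explicit expression
$$
\mc{A}_{\pi\dg}=\mc{T}^{\dg}\lp(\mc{R}^{\dg}\n\mc{A}\m\mc{T}^{\dg})^{\dg}\kp\mc{R}^{\dg}.
$$
Everything else is algebraic rearrangement.

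For the forward direction, I would assume the hypothesis $\mc{A}_{\pi\dg}=\mc{T}^{\dg}\lp\mc{S}^{\dg}\kp\mc{R}^{\dg}$ and subtract the formula from \eqref{epgi} to obtain
$$
\mc{T}^{\dg}\lp\bigl[\mc{S}^{\dg}-(\mc{R}^{\dg}\n\mc{A}\m\mc{T}^{\dg})^{\dg}\bigr]\kp\mc{R}^{\dg}=\mc{O}.
$$
Setting $\mc{Y}:=\mc{S}^{\dg}-(\mc{R}^{\dg}\n\mc{A}\m\mc{T}^{\dg})^{\dg}\in{\C}^{G_1\times\cdots\times G_L\times H_1\times\cdots\times H_K}$ yields the decomposition $\mc{S}^{\dg}=(\mc{R}^{\dg}\n\mc{A}\m\mc{T}^{\dg})^{\dg}+\mc{Y}$ with $\mc{T}^{\dg}\lp\mc{Y}\kp\mc{R}^{\dg}=\mc{O}$, which is exactly the stated condition.

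For the converse, I would simply substitute the hypothesized form of $\mc{S}^{\dg}$ into $\mc{T}^{\dg}\lp\mc{S}^{\dg}\kp\mc{R}^{\dg}$, distribute $\m$ across the $\n$-sum, and use $\mc{T}^{\dg}\lp\mc{Y}\kp\mc{R}^{\dg}=\mc{O}$ together with \eqref{epgi} to recover $\mc{A}_{\pi\dg}$ on the nose. No deep step is needed: both implications are essentially a single line once the formula \eqref{epgi} is in hand.

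Because the argument is so short, there is no serious obstacle; the only point worth a second glance is making sure the dimensions of the tensor $\mc{Y}$ match the index pattern of $\mc{S}^{\dg}$, so that the sum $(\mc{R}^{\dg}\n\mc{A}\m\mc{T}^{\dg})^{\dg}+\mc{Y}$ and the product $\mc{T}^{\dg}\lp\mc{Y}\kp\mc{R}^{\dg}$ are well defined with the $\lp$ and $\kp$ Einstein contractions, exactly as they appear in the statement. I would flag this briefly at the start of the proof and then present the two-line computation described above.
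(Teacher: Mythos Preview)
Your proposal is correct. The converse direction (substituting $\mc{S}^{\dg}=(\mc{R}^{\dg}\n\mc{A}\m\mc{T}^{\dg})^{\dg}+\mc{Y}$ into $\mc{T}^{\dg}\lp\mc{S}^{\dg}\kp\mc{R}^{\dg}$ and invoking \eqref{epgi}) is exactly what the paper does.

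For the forward direction, however, your route is shorter and more elementary than the paper's. You simply set $\mc{Y}:=\mc{S}^{\dg}-(\mc{R}^{\dg}\n\mc{A}\m\mc{T}^{\dg})^{\dg}$ and subtract the two expressions for $\mc{A}_{\pi\dg}$, using only linearity of the Einstein product. The paper instead first multiplies the hypothesis on the left by $\mc{T}$ and on the right by $\mc{R}$, invokes the identity $\mc{T}\m\mc{T}^{\dg}\lp(\mc{R}^{\dg}\n\mc{A}\m\mc{T}^{\dg})^{\dg}\kp\mc{R}^{\dg}\n\mc{R}=(\mc{R}^{\dg}\n\mc{A}\m\mc{T}^{\dg})^{\dg}$ from \eqref{Enscpgi}, and then treats the resulting equation $\mc{T}\m\mc{T}^{\dg}\lp\mc{X}\kp\mc{R}^{\dg}\n\mc{R}=(\mc{R}^{\dg}\n\mc{A}\m\mc{T}^{\dg})^{\dg}$ as a linear tensor equation whose general solution is $\mc{X}=(\mc{R}^{\dg}\n\mc{A}\m\mc{T}^{\dg})^{\dg}+\mc{Z}-\mc{T}\m\mc{T}^{\dg}\lp\mc{Z}\kp\mc{R}^{\dg}\n\mc{R}$ for arbitrary $\mc{Z}$; the tensor $\mc{Y}$ is then taken to be $\mc{Z}-\mc{T}\m\mc{T}^{\dg}\lp\mc{Z}\kp\mc{R}^{\dg}\n\mc{R}$, and one checks $\mc{T}^{\dg}\lp\mc{Y}\kp\mc{R}^{\dg}=\mc{O}$. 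That detour brings in Lemma~\ref{ir} and the general-solution machinery, whereas your argument needs nothing beyond \eqref{epgi} and distributivity. Your approach is preferable here; the paper's has the minor advantage of exhibiting $\mc{Y}$ in the parametrized form one gets from the theory of consistent tensor equations, but that extra structure is not required by the statement.
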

\begin{proof}
Suppose that
$\mc{S}^{\dg}=(\mc{R}^{\dg}\n\mc{A}\m\mc{T}^{\dg})^{\dg}+\mc{Y}$. By
using equation \eqref{epgi}, we then have
$\mc{T}^{\dg}\lp\mc{S}^{\dg}\kp\mc{R}^{\dg}=\mc{T}^{\dg}\lp(\mc{R}^{\dg}\n\mc{A}\m\mc{T}^{\dg})^{\dg}\kp\mc{R}^{\dg}+\mc{T}^{\dg}\lp\mc{Y}\kp\mc{R}^{\dg}=\mc{A}_{\pi\dg}$.

Conversely, suppose
that$\mc{A}_{\pi\dg}=\mc{T}^{\dg}\lp\mc{S}^{\dg}\kp\mc{R}^{\dg}$,
then by equation \eqref{epgi} and the last equality of equation
\eqref{Enscpgi}
\begin{equation}
    \mc{T}\m\mc{T}^{\dg}\lp\mc{S}^{\dg}\kp\mc{R}^{\dg}\n\mc{R}=(\mc{R}^{\dg}\n\mc{A}\m\mc{T}^{\dg})^{\dg}.
\end{equation}
Hereafter, by applying equation \eqref{ir2} to
$\mc{T}\m\mc{T}^{\dg}\lp\mc{X}\kp\mc{R}^{\dg}\n\mc{R}=(\mc{R}^{\dg}\n\mc{A}\m\mc{T}^{\dg})^{\dg}$,
it follows that there exists a $\mc{Z}$ such that
\begin{equation}
    \mc{S}^{\dg}=(\mc{R}^{\dg}\n\mc{A}\m\mc{T}^{\dg})^{\dg}+\mc{Z}-\mc{T}\m\mc{T}^{\dg}\lp\mc{Z}\kp\mc{R}^{\dg}\n\mc{R},
\end{equation}
due to the equation \eqref{Enscpgi} and the fact
$(\mc{T}\m\mc{T}^{\dg})^{\dg}=\mc{T}\m\mc{T}^{\dg}$ and
$(\mc{R}^{\dg}\n\mc{R})^{\dg}=\mc{R}^{\dg}\n\mc{R}$. Thus, if
$\mc{Y}=\mc{Z}-\mc{T}\m\mc{T}^{\dg}\lp\mc{Z}\kp\mc{R}^{\dg}\n\mc{R}$,
then
\begin{eqnarray*}
    \mc{T}^{\dg}\lp\mc{Y}\kp\mc{R}^{\dg}&=&\mc{T}^{\dg}\lp\mc{Z}\kp\mc{R}^{\dg}-\mc{T}^{\dg}\lp\mc{T}\m\mc{T}^{\dg}\lp\mc{Y}\kp\mc{R}^{\dg}\n\mc{R}\kp\mc{R}^{\dg}\\
    &=&\mc{O}.
\end{eqnarray*}
\end{proof}

Next, we provide an example to show that it may be possible that
$\mc{A}^{\dg}=\mc{T}^{\dg}\lp\mc{S}^{\dg}\kp\mc{R}^{\dg}$ while
$\mc{A}_{\pi\dg}\neq\mc{A}^{\dg}$.

\begin{example}\label{exmppgi}
Let $\mc{A}=(a_{ijkl})\in{\C}^{2\times2\times2\times 2},~1\leq
i,j,k,l\leq2$. Suppose that $\mc{A}$ is factorized as
$\mc{A}=\mc{R}\2\mc{S}\2 \mc{T}$, where
$\mc{R}=(r_{ijkl})\in{\C}^{2\times2\times2\times
2},~\mc{S}=(s_{ijkl})\in{\C}^{2\times2\times2\times
2},~\mc{T}=(t_{ijkl})\in{\C}^{2\times2\times2\times 2},~1\leq
i,j,k,l\leq2$ such that

\vspace{0.5cm} \noindent
$$
a_{ij11}=
        \begin{pmatrix}
              1 & 1\\
              1 & 1
        \end{pmatrix},~
a_{ij12}=
        \begin{pmatrix}
              0 & 0\\
              0 & 0
        \end{pmatrix},~
a_{ij21}=
        \begin{pmatrix}
             0 & 1\\
             1 & 1
        \end{pmatrix},~
a_{ij22}=
        \begin{pmatrix}
            0 & 0\\
            0 & 0
        \end{pmatrix};
$$

\vspace{0.5cm} \noindent

$$
r_{ij11}=
         \begin{pmatrix}
                  1 & 0\\
                  0 & 0
         \end{pmatrix},~
r_{ij12}=
         \begin{pmatrix}
                      0 & 1\\
                      1 & 0
          \end{pmatrix},~
r_{ij21}=
          \begin{pmatrix}
                       0 & 0\\
                       1 & 0
          \end{pmatrix},~
r_{ij22}=
        \begin{pmatrix}
                     0 & 0\\
                     0 & 0
        \end{pmatrix};
$$

\vspace{0.5cm} \noindent
$$
s_{ij11}=
        \begin{pmatrix}
                     1 & 0\\
                     0 & 0
        \end{pmatrix},~
s_{ij12}=
        \begin{pmatrix}
                     0 & 1\slash2\\
                     -3\slash2 & 0
        \end{pmatrix},~
s_{ij21}=
        \begin{pmatrix}
                     0 & 1\slash2\\
                     1\slash2 & 0
        \end{pmatrix},~
s_{ij22}=
        \begin{pmatrix}
                     0 & 0\\
                     0 & 0
        \end{pmatrix};
$$

\vspace{0.5cm} \noindent and
$$
t_{ij11}=
        \begin{pmatrix}
                     1 & 1\\
                     1 & 1
        \end{pmatrix},~
t_{ij12}=
        \begin{pmatrix}
                     0 & 1\\
                     1 & 1
        \end{pmatrix},~
t_{ij21}=
        \begin{pmatrix}
                     0 & 1\\
                     1 & 1
        \end{pmatrix},~
t_{ij22}=
        \begin{pmatrix}
                     0 & 0\\
                     0 & 0
        \end{pmatrix}.
$$

\noindent Now, $\mc{A}^{\dg}=(a'_{ijkl})$,
$\mc{R}^{\dg}=(r'_{ijkl})$, $\mc{S}^{\dg}=(s'_{ijkl})$, and
$\mc{T}^{\dg}=(t'_{ijkl})$ are given by
$$
a'_{ij11}=
         \begin{pmatrix}
                      1 & 0\\
                      -1 & 0
        \end{pmatrix},~
a'_{ij12}=
         \begin{pmatrix}
                      0 & 0\\
                      1\slash3 & 0
        \end{pmatrix},~
a'_{ij21}=
         \begin{pmatrix}
                      0 & 0\\
                      1\slash3 & 0
        \end{pmatrix},~
a'_{ij22}=
         \begin{pmatrix}
                      0 & 0\\
                      1\slash3 & 0
        \end{pmatrix};
$$

\vspace{0.5cm} \noindent
$$
r'_{ij11}=
         \begin{pmatrix}
                      1 & 0\\
                      0 & 0
         \end{pmatrix},~
r'_{ij12}=
         \begin{pmatrix}
                      0 & 1\\
                      -1 & 0
        \end{pmatrix},~
r'_{ij21}=
         \begin{pmatrix}
                      0 & 0\\
                      1 & 0
         \end{pmatrix},~
r'_{ij22}=
         \begin{pmatrix}
                      0 & 0\\
                      0 & 0
         \end{pmatrix};
$$

\vspace{0.5cm} \noindent
$$
s'_{ij11}=
         \begin{pmatrix}
                      1 & 0\\
                      0 & 0
         \end{pmatrix},~
s'_{ij12}=
         \begin{pmatrix}
                      0 & 1\slash2\\
                      3\slash2 & 0
        \end{pmatrix},~
s'_{ij21}=
         \begin{pmatrix}
                      0 & -1\slash2\\
                      1\slash2 & 0
         \end{pmatrix},~
s'_{ij22}=
         \begin{pmatrix}
                      0 & 0\\
                      0 & 0
         \end{pmatrix};
$$

\vspace{0.5cm} \noindent and
$$
t'_{ij11}=
         \begin{pmatrix}
                      1 & -1\slash2\\
                      -1\slash2 & 0
         \end{pmatrix},~
t'_{ij12}=
         \begin{pmatrix}
                      0 & 1\slash6\\
                      1\slash6 & 0
         \end{pmatrix},~
t'_{ij21}=
         \begin{pmatrix}
                      0 & 1\slash6\\
                      1\slash6 & 0
         \end{pmatrix},~
t'_{ij22}=
         \begin{pmatrix}
                     0 & 1\slash6\\
                      1\slash6 & 0
         \end{pmatrix}.
$$

\vspace{0.2cm} \noindent Here,
$\mc{T}^{\dg}\2\mc{S}^{\dg}\2\mc{R}^{\dg}=(y_{ijkl})$, where
 \begin{eqnarray*}
y_{ij11}=
         \begin{pmatrix}
                      1 & 0\\
                      -1 & 0
        \end{pmatrix},~
y_{ij12}=
         \begin{pmatrix}
                      0 & 0\\
                      1\slash3 & 0
        \end{pmatrix},~
y_{ij21}=
         \begin{pmatrix}
                      0 & 0\\
                      1\slash3 & 0
        \end{pmatrix},~
y_{ij22}=
         \begin{pmatrix}
                      0 & 0\\
                      1\slash3 & 0
        \end{pmatrix}.
\end{eqnarray*}
But,
$\mc{A}_{\pi\dg}=\mc{T}^{\dg}\2(\mc{R}^{\dg}\2\mc{A}\2\mc{T}^{\dg})^{\dg}\2\mc{R}^{\dg}=(x_{ijkl})$,
where
 \begin{eqnarray*}
 x_{ij11}=
         \begin{pmatrix}
                      1 & 1\slash2\\
                      -1 & 0
         \end{pmatrix},~
x_{ij12}=
         \begin{pmatrix}
                      0 & -1\slash6\\
                      1\slash3 & 0
         \end{pmatrix},~
x_{ij21}=
        \begin{pmatrix}
                     0 & -1\slash6\\
                      1\slash3 & 0
        \end{pmatrix},~
x_{ij22}=
        \begin{pmatrix}
                     0 & -1\slash6\\
                      1\slash3 & 0
        \end{pmatrix}.
\end{eqnarray*}
Here, in this example $\mc{A}_{\pi\dg}\neq\mc{A}^{\dg}$, while
$\mc{A}^{\dg}=\mc{T}^{\dg}\2\mc{S}^{\dg}\2\mc{R}^{\dg}$.
\end{example}

In case of the Moore-Penrose inverse, we observed that the
Moore-Penrose inverse of $\mc{A}^{\dg}$ reduces to $\mc{A}$ for
every tensor $\mc{A}$ \cite{sun}. This fact leads to a natural
question ``Does the product Moore-Penrose inverse of
$\mc{A}_{\pi\dg}$ also reduces to $\mc{A}$?'' The answer is
\textit{affirmative}, if we use the same factorization employed in
forming $\mc{A}_{\pi\dg}$. The following theorem proves this fact.

\begin{theorem}
Let $\mc{A}=\mc{R}\kp\mc{S}\lp\mc{T}\in {\C}^{I_{1}\times
\cdots\times I_{N}\times J_{1}\times\cdots \times J_{M}}$, where
$\mc{R}\in {\C}^{I_{1}\times \cdots\times I_{N}\times
H_{1}\times\cdots \times H_{K}}$, $\mc{S}\in {\C}^{H_{1}\times
\cdots\times H_{K}\times G_{1}\times\cdots \times G_{L}}$ and
$\mc{T}\in {\C}^{G_{1}\times\cdots \times G_{L}\times
J_{1}\times\cdots \times J_{M}}$. Then,
$$(\mc{A}_{\pi\dg})_{\pi\dg}=[\mc{T}^{\dg}\lp(\mc{R}^{\dg}\n\mc{A}\m\mc{T}^{\dg})^{\dg}\kp\mc{R}^{\dg}]_{\pi\dg}=\mc{A}.$$
\end{theorem}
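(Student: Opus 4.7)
The plan is to apply the explicit formula \eqref{epgi} with $\mc{A}$ replaced by $\mc{A}_{\pi\dg}$, reading the expression $\mc{A}_{\pi\dg}=\mc{T}^{\dg}\lp(\mc{R}^{\dg}\n\mc{A}\m\mc{T}^{\dg})^{\dg}\kp\mc{R}^{\dg}$ itself as the required triple factorization: $\mc{T}^{\dg}$ plays the role of $\mc{R}$, $(\mc{R}^{\dg}\n\mc{A}\m\mc{T}^{\dg})^{\dg}$ plays the role of $\mc{S}$, and $\mc{R}^{\dg}$ plays the role of $\mc{T}$. Tracking the contraction subscripts imposed by the outer dimensions $J\times I$ of $\mc{A}_{\pi\dg}$, formula \eqref{epgi} becomes
$$(\mc{A}_{\pi\dg})_{\pi\dg} = \mc{R}\kp(\mc{T}\m\mc{A}_{\pi\dg}\n\mc{R})^{\dg}\lp\mc{T},$$
so the proof reduces to identifying the inner dagger.

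Substituting the definition of $\mc{A}_{\pi\dg}$ and using associativity,
$$\mc{T}\m\mc{A}_{\pi\dg}\n\mc{R} = (\mc{T}\m\mc{T}^{\dg})\lp(\mc{R}^{\dg}\n\mc{A}\m\mc{T}^{\dg})^{\dg}\kp(\mc{R}^{\dg}\n\mc{R}).$$
Set $\mc{U}=\mc{R}^{\dg}\n\mc{A}\m\mc{T}^{\dg}$. The Moore-Penrose equations for $\mc{T}$ and $\mc{R}$ (specifically $\mc{T}^{\dg}\lp\mc{T}\m\mc{T}^{\dg}=\mc{T}^{\dg}$ and $\mc{R}^{\dg}\n\mc{R}\kp\mc{R}^{\dg}=\mc{R}^{\dg}$) yield $\mc{U}\lp(\mc{T}\m\mc{T}^{\dg})=\mc{U}$ and $(\mc{R}^{\dg}\n\mc{R})\kp\mc{U}=\mc{U}$. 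Since $\mc{T}\m\mc{T}^{\dg}$ and $\mc{R}^{\dg}\n\mc{R}$ are Hermitian, Lemma \ref{ir}---whose proof transfers verbatim to this rectangular setting, as is already used inside the proof of Theorem \ref{aegi}---gives $(\mc{T}\m\mc{T}^{\dg})\lp\mc{U}^{\dg}=\mc{U}^{\dg}$ and $\mc{U}^{\dg}\kp(\mc{R}^{\dg}\n\mc{R})=\mc{U}^{\dg}$. The triple product therefore collapses to $\mc{U}^{\dg}$.

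Taking the Moore-Penrose inverse of $\mc{T}\m\mc{A}_{\pi\dg}\n\mc{R}=\mc{U}^{\dg}$ yields $(\mc{T}\m\mc{A}_{\pi\dg}\n\mc{R})^{\dg}=\mc{U}=\mc{R}^{\dg}\n\mc{A}\m\mc{T}^{\dg}$. Substituting back,
$$(\mc{A}_{\pi\dg})_{\pi\dg} = \mc{R}\kp\mc{R}^{\dg}\n\mc{A}\m\mc{T}^{\dg}\lp\mc{T} = \mc{A},$$
the last equality being two applications of equation \eqref{pgipe1}. The main obstacle throughout is bookkeeping: reading off which of the four Einstein products $\kp,\lp,\n,\m$ occupies each slot when instantiating formula \eqref{epgi} on the new factorization of $\mc{A}_{\pi\dg}$ (the ``inner'' contractions swap from $\kp,\lp$ to $\lp,\kp$, and the ``outer'' contractions swap from $\n,\m$ to $\m,\n$). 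Once the subscripts line up, the Moore-Penrose equations for $\mc{R}$ and $\mc{T}$ together with Lemma \ref{ir} do all the remaining work.
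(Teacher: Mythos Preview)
Your proof is correct and follows essentially the same route as the paper: apply formula \eqref{epgi} to the factorization $\mc{A}_{\pi\dg}=\mc{T}^{\dg}\lp(\mc{R}^{\dg}\n\mc{A}\m\mc{T}^{\dg})^{\dg}\kp\mc{R}^{\dg}$ to obtain the paper's equation \eqref{le}, then invoke Lemma \ref{ir} (in its rectangular form) to collapse the inner bracket to $\mc{R}^{\dg}\n\mc{A}\m\mc{T}^{\dg}$, and finish with \eqref{pgipe1}. The paper compresses the Lemma \ref{ir} step into a single sentence, whereas you spell out the absorptions $(\mc{T}\m\mc{T}^{\dg})\lp\mc{U}^{\dg}=\mc{U}^{\dg}$ and $\mc{U}^{\dg}\kp(\mc{R}^{\dg}\n\mc{R})=\mc{U}^{\dg}$ explicitly, but the argument is the same.
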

\begin{proof}
Applying equation \eqref{epgi} to
$\mc{A}_{\pi\dg}=\mc{T}^{\dg}\lp(\mc{R}^{\dg}\n\mc{A}\m\mc{T}^{\dg})^{\dg}\kp\mc{R}^{\dg}$
gives
\begin{equation}\label{le}
    (\mc{A}_{\pi\dg})_{\pi\dg}=\mc{R}\kp[\mc{T}\m\mc{T}^{\dg} \lp(\mc{R}^{\dg}\n\mc{A}\m\mc{T}^{\dg})^{\dg}\kp\mc{R}^{\dg}\n\mc{R}]^{\dg}\lp\mc{T}.
\end{equation}
With the help of Lemma \ref{ir}, equation \eqref{le} gives
$(\mc{A}_{\pi\dg})_{\pi\dg}=\mc{A}$.

\end{proof}

\section{Conclusion}

Note that in Example \ref{exmppgi},
$\mc{A}^{\dg}=\mc{T}^{\dg}\lp\mc{S}^{\dg}\kp\mc{R}^{\dg}$ holds.  A
natural question arises now  regarding the trueness of the same
equality which is called as the \textit{triple reverse order law} of
tensors via the Einstein product, i.e., $(\mc{R} \kp \mc{S} \lp
\mc{T})^{\dg}=\mc{T}^{\dg}\lp\mc{S}^{\dg}\kp\mc{R}^{\dg}$.  However,
this is not true in general, and  is shown next through an example.

\begin{example}
Let $\mc{R}=(r_{ijkl})\in{\C}^{2\times2\times2\times
2},~\mc{S}=(s_{ijkl})\in{\C}^{2\times2\times2\times
2},~\mc{T}=(t_{ijkl})\in{\C}^{2\times2\times2\times 2},~1\leq
i,j,k,l\leq2$ such that

\vspace{0.5cm}

\noindent
$$
r_{ij11}=
         \begin{pmatrix}
                  1 & 0\\
                  -1 & 0
         \end{pmatrix},~
r_{ij12}=
         \begin{pmatrix}
                      1 & 1\\
                      0 & 1
          \end{pmatrix},~
r_{ij21}=
          \begin{pmatrix}
                       0 & 1\\
                       0 & 1
          \end{pmatrix},~
r_{ij22}=
        \begin{pmatrix}
                     -1 & 0\\
                     1 & 0
        \end{pmatrix};
$$

\vspace{0.5cm} \noindent
$$
s_{ij11}=
        \begin{pmatrix}
                     0 & 1\\
                     0 & -1
        \end{pmatrix},~
s_{ij12}=
        \begin{pmatrix}
                     1 & 1\\
                     1 & 0
        \end{pmatrix},~
s_{ij21}=
        \begin{pmatrix}
                     1 & 0\\
                     1 & 0
        \end{pmatrix},~
s_{ij22}=
        \begin{pmatrix}
                     0 & -1\\
                     0 & 1
        \end{pmatrix};
$$

\vspace{0.5cm} \noindent and
$$
t_{ij11}=
        \begin{pmatrix}
                     1 & 0\\
                     0 & 0
        \end{pmatrix},~
t_{ij12}=
        \begin{pmatrix}
                     0 & 0\\
                     0 & 1
        \end{pmatrix},~
t_{ij21}=
        \begin{pmatrix}
                     0 & 0\\
                     0 & 0
        \end{pmatrix},~
t_{ij22}=
        \begin{pmatrix}
                     0 & 1\\
                     0 & 0
        \end{pmatrix}.
$$

\vspace{0.2cm} \noindent
And let $\mc{A}=\mc{R}\2\mc{S}\2 \mc{T}=(a_{ijkl})\in{\C}^{2\times2\times2\times 2},~1\leq i,j,k,l\leq2$, where \\

\vspace{0.2cm} \noindent
$$
a_{ij11}=
        \begin{pmatrix}
              -1 & -1\\
              0 & 1
        \end{pmatrix},~
a_{ij12}=
        \begin{pmatrix}
              1 & 0\\
              0 & 1
        \end{pmatrix},~
a_{ij21}=
        \begin{pmatrix}
             1 & 1\\
             0 & 0
        \end{pmatrix},~
a_{ij22}=
        \begin{pmatrix}
            1 & 1\\
            0 & -1
        \end{pmatrix}.
$$

\vspace{0.2cm} \noindent
Now, $\mc{A}^{\dg}=(a'_{ijkl})$, $\mc{R}^{\dg}=(r'_{ijkl})$, $\mc{S}^{\dg}=(s'_{ijkl})$, and $\mc{T}^{\dg}=(t'_{ijkl})$ are given by \\

\vspace{0.2cm} \noindent
$$
r'_{ij11}=
         \begin{pmatrix}
                      0 & 1\\
                      -1 & 0
         \end{pmatrix},~
r'_{ij12}=
         \begin{pmatrix}
                      0 & 0\\
                      1\slash 2 & 0
        \end{pmatrix},~
r'_{ij21}=
         \begin{pmatrix}
                      -1\slash2 & 1\\
                      -1 & 1\slash 2
         \end{pmatrix},~
r'_{ij22}=
         \begin{pmatrix}
                      0 & 0\\
                      1\slash 2 & 0
         \end{pmatrix};
$$

\vspace{0.5cm} \noindent
$$
s'_{ij11}=
         \begin{pmatrix}
                      10 & 0\\
                      1\slash 2 & 0
         \end{pmatrix},~
s'_{ij12}=
         \begin{pmatrix}
                      0 & 1\\
                      -1 & 0
        \end{pmatrix},~
s'_{ij21}=
         \begin{pmatrix}
                      0 & 0\\
                      1\slash2 & 0
         \end{pmatrix},~
s'_{ij22}=
         \begin{pmatrix}
                      -1\slash 2 & 1\\
                      -1 & 1\slash 2
         \end{pmatrix};
$$

\vspace{0.5cm} \noindent and
$$
t'_{ij11}=
         \begin{pmatrix}
                      1 & 0\\
                      0 & 0
         \end{pmatrix},~
t'_{ij12}=
         \begin{pmatrix}
                      0 & 0\\
                      0 & 1
         \end{pmatrix},~
t'_{ij21}=
         \begin{pmatrix}
                      0 & 0\\
                      0 & 0
         \end{pmatrix},~
t'_{ij22}=
         \begin{pmatrix}
                     0 & 1\\
                     0 & 0
         \end{pmatrix}.
$$

\vspace{0.2cm} \noindent
Here, $\mc{A}^{\dg}=(\mc{R}\2\mc{S}\2\mc{T})^{\dg}=(a'_{ijkl})$, where \\

\vspace{0.2cm} \noindent
$$
a'_{ij11}=
         \begin{pmatrix}
                      -1\slash 2 & 1\\
                      -1 & 1\slash 2
        \end{pmatrix},~
a'_{ij12}=
         \begin{pmatrix}
                      1\slash 2 & -1\\
                      2 & -1\slash 2
        \end{pmatrix},~
a'_{ij21}=
         \begin{pmatrix}
                      0 & 0\\
                      0 & 0
        \end{pmatrix},~
a'_{ij22}=
         \begin{pmatrix}
                      1\slash 2 & 0\\
                      1 & -1\slash 2
        \end{pmatrix}.
$$

\vspace{0.2cm} \noindent But,
$\mc{T}^{\dg}\2\mc{S}^{\dg}\2\mc{R}^{\dg}=(x_{ijkl})$, where
 \begin{eqnarray*}
x_{ij11}=
         \begin{pmatrix}
                      -1\slash 4 & 1\slash2\\
                      -1\slash 2 & 1\slash 4
         \end{pmatrix},~
x_{ij12}=
         \begin{pmatrix}
                      1\slash 2 & -3\slash2\\
                      9\slash4 & -1\slash2
         \end{pmatrix},~
x_{ij21}=
        \begin{pmatrix}
                     0 & 0\\
                     0 & 0
        \end{pmatrix},~
x_{ij22}=
        \begin{pmatrix}
                     1\slash2 & -1\\
                      3\slash2 & -1\slash 2
        \end{pmatrix}.
\end{eqnarray*}
Here, in this example
$(\mc{R}\2\mc{S}\2\mc{T})^{\dg}\neq\mc{T}^{\dg}\2\mc{S}^{\dg}\2\mc{R}^{\dg}$.
\end{example}

The following theorem gives a necessary and sufficient condition for
the triple reverse order law.

\begin{theorem}\label{trol1}
Let $\mc{A}=\mc{R}\kp\mc{S}\lp\mc{T}\in {\C}^{I_{1}\times
\cdots\times I_{N}\times J_{1}\times\cdots \times J_{M}}$, where
$\mc{R}\in {\C}^{I_{1}\times \cdots\times I_{N}\times
H_{1}\times\cdots \times H_{K}}$, $\mc{S}\in {\C}^{H_{1}\times
\cdots\times H_{K}\times G_{1}\times\cdots \times G_{L}}$ and
$\mc{T}\in {\C}^{G_{1}\times\cdots \times G_{L}\times
J_{1}\times\cdots \times J_{M}}$. Then,
$\mc{A}^{\dg}=\mc{T}^{\dg}\lp\mc{S}^{\dg}\kp\mc{R}^{\dg}$ if and
only if
\begin{itemize}
    \item[(i)] $\mc{R}^{\dg}\n(\mc{R}\kp\mc{S}\lp\mc{T})\m(\mc{T}^{\dg}\lp\mc{S}^{\dg}\kp\mc{R}^{\dg})\n(\mc{R}\kp\mc{S}\lp\mc{T})\m\mc{T}^{\dg}=\mc{R}^{\dg}\n(\mc{R}\kp\mc{S}\lp\mc{T})\m\mc{T}^{\dg},$
    \item[(ii)] $\mc{T}\m (\mc{T}^{\dg}\lp\mc{S}^{\dg}\kp\mc{R}^{\dg})\n(\mc{R}\kp\mc{S}\lp\mc{T})\m(\mc{T}^{\dg}\lp\mc{S}^{\dg}\kp\mc{R}^{\dg})\n\mc{R}=\mc{T}\m(\mc{T}^{\dg}\lp\mc{S}^{\dg}\kp\mc{R}^{\dg})\n\mc{R},$
    \item[(iii)]$[\mc{R}^{H}\n(\mc{R}\kp\mc{S}\lp\mc{T})\m(\mc{T}^{\dg}\lp\mc{S}^{\dg}\kp\mc{R}^{\dg})\n\mc{R}]^{H}=\mc{R}^{H}\n(\mc{R}\kp\mc{S}\lp\mc{T})\m(\mc{T}^{\dg}\lp\mc{S}^{\dg}\kp\mc{R}^{\dg})\n\mc{R},$
    \item[(iv)]$[\mc{T} \m(\mc{T}^{\dg}\lp\mc{S}^{\dg}\kp\mc{R}^{\dg})\n(\mc{R}\kp\mc{S}\lp\mc{T})\m\mc{T}^{H}]^{H}=\mc{T}\m (\mc{T}^{\dg}\lp\mc{S}^{\dg}\kp\mc{R}^{\dg})\n(\mc{R}\kp\mc{S}\lp\mc{T})\m\mc{T}^{H}$.
\end{itemize}

\end{theorem}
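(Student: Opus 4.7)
The plan is to recognize conditions (i)--(iv) as ``pinched'' versions of the four Moore--Penrose equations for $\mc{A}$ with the candidate inverse $\mc{X}:=\mc{T}^{\dg}\lp\mc{S}^{\dg}\kp\mc{R}^{\dg}$, and to strip the outer factors using the projector identities coming from \eqref{pgipe1}. The main auxiliary observations are that $\mc{T}^{\dg}\lp\mc{T}\m\mc{X}=\mc{X}$ and $\mc{X}\n\mc{R}\kp\mc{R}^{\dg}=\mc{X}$, both of which follow immediately from the M--P relations $\mc{T}^{\dg}\lp\mc{T}\m\mc{T}^{\dg}=\mc{T}^{\dg}$ and $\mc{R}^{\dg}\n\mc{R}\kp\mc{R}^{\dg}=\mc{R}^{\dg}$.

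For necessity, substituting $\mc{X}=\mc{A}^{\dg}$ collapses (i) and (ii) to direct consequences of the first two Moore--Penrose equations for $\mc{A}^{\dg}$, while (iii) and (iv) follow from the Hermiticity of the canonical projectors $\mc{A}\m\mc{A}^{\dg}$ and $\mc{A}^{\dg}\n\mc{A}$ after transposing the sandwiched expression.

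For sufficiency, I would verify the four Moore--Penrose equations of Definition \ref{defmpi} for $(\mc{A},\mc{X})$ one at a time. The equation $\mc{A}\m\mc{X}\n\mc{A}=\mc{A}$ drops out of (i) by pre-multiplying with $\mc{R}\kp$ and post-multiplying with $\lp\mc{T}$ and invoking \eqref{pgipe1} on both sides. The equation $\mc{X}\n\mc{A}\m\mc{X}=\mc{X}$ drops out of (ii) by pre-multiplying with $\mc{T}^{\dg}\lp$ and post-multiplying with $\kp\mc{R}^{\dg}$ and then applying the two projection identities above. Uniqueness of $\mc{A}^{\dg}$ then forces $\mc{X}=\mc{A}^{\dg}$ once the Hermiticity of $\mc{A}\m\mc{X}$ and $\mc{X}\n\mc{A}$ is established.

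The main obstacle is extracting Hermiticity of $\mc{A}\m\mc{X}$ and $\mc{X}\n\mc{A}$ from the pinched Hermiticity in (iii) and (iv). The key identities, coming from the Hermiticity of the M--P projectors, are
\begin{equation*}
(\mc{R}^{\dg})^H\kp\mc{R}^H=\mc{R}\kp\mc{R}^{\dg},\qquad \mc{T}^H\lp(\mc{T}^{\dg})^H=\mc{T}^{\dg}\lp\mc{T}.
\end{equation*}
Pre-multiplying (iii) by $(\mc{R}^{\dg})^H\kp$ and post-multiplying by $\kp\mc{R}^{\dg}$, the right-hand side collapses, via $\mc{R}\kp\mc{R}^{\dg}\n\mc{A}=\mc{A}$ and $\mc{X}\n\mc{R}\kp\mc{R}^{\dg}=\mc{X}$, to $\mc{A}\m\mc{X}$, while the left-hand side becomes
\begin{equation*}
\mc{R}\kp\mc{R}^{\dg}\n(\mc{A}\m\mc{X})^H\n\mc{R}\kp\mc{R}^{\dg},
\end{equation*}
which is exactly $(\mc{A}\m\mc{X})^H$ once one Hermitian-transposes the identity $\mc{A}\m\mc{X}=\mc{R}\kp\mc{R}^{\dg}\n\mc{A}\m\mc{X}\n\mc{R}\kp\mc{R}^{\dg}$ and uses $(\mc{R}\kp\mc{R}^{\dg})^H=\mc{R}\kp\mc{R}^{\dg}$. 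Hence $(\mc{A}\m\mc{X})^H=\mc{A}\m\mc{X}$. An entirely parallel argument, pre-multiplying (iv) by $\mc{T}^{\dg}\lp$ and post-multiplying by $\lp(\mc{T}^{\dg})^H$, yields $(\mc{X}\n\mc{A})^H=\mc{X}\n\mc{A}$, completing the verification.
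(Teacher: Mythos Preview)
Your proposal is correct and follows essentially the same route as the paper: each of (i)--(iv) is shown to be equivalent to the corresponding Moore--Penrose equation for $(\mc{A},\mc{X})$ with $\mc{X}=\mc{T}^{\dg}\lp\mc{S}^{\dg}\kp\mc{R}^{\dg}$, by stripping the outer factors via the projector identities from \eqref{pgipe1} and the Hermiticity of $\mc{R}\kp\mc{R}^{\dg}$ and $\mc{T}^{\dg}\lp\mc{T}$. The paper's own argument is terser (it just records, for instance, that pre-multiplying (iii) by $(\mc{R}^{\dg})^{H}$ and post-multiplying by $\mc{R}^{\dg}$ recovers $(\mc{A}\m\mc{X})^{H}=\mc{A}\m\mc{X}$), but your expanded justification of that step is exactly what is needed to make the manipulation rigorous.
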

\begin{proof}
It can be esily seen that $(i)$ and $(ii)$ are equivalent to
$(\mc{R}\kp\mc{S}\lp\mc{T})\m(\mc{T}^{\dg}\lp\mc{S}^{\dg}\kp\mc{R}^{\dg})\\\n(\mc{R}\kp\mc{S}\lp\mc{T})=\mc{R}\kp\mc{S}\lp\mc{T}$
and $
(\mc{T}^{\dg}\lp\mc{S}^{\dg}\kp\mc{R}^{\dg})\n(\mc{R}\kp\mc{S}\lp\mc{T})\m(\mc{T}^{\dg}\lp\mc{S}^{\dg}\kp\mc{R}^{\dg})=\mc{T}^{\dg}\lp\\\mc{S}^{\dg}\kp\mc{R}^{\dg}$,
respectively. $(iii)$ is equivalent to
$[(\mc{R}\kp\mc{S}\lp\mc{T})\m(\mc{T}^{\dg}\lp\mc{S}^{\dg}\kp\mc{R}^{\dg})]^{H}=(\mc{R}\kp\mc{S}\lp\\\mc{T})\m(\mc{T}^{\dg}\lp\mc{S}^{\dg}\kp\mc{R}^{\dg})$.
Because
$[(\mc{R}\kp\mc{S}\lp\mc{T})\m(\mc{T}^{\dg}\lp\mc{S}^{\dg}\kp\mc{R}^{\dg})]^{H}=(\mc{R}\kp\mc{S}\lp\mc{T})\m(\mc{T}^{\dg}\lp\mc{S}^{\dg}\\\kp\mc{R}^{\dg})$
imply $(iii)$ and conversely, by pre-multiplying  $\mc{R}^{\dg *}$
and post-multiplying $\mc{R}^{\dg}$ to $(iii)$ gives
$[(\mc{R}\kp\mc{S}\lp\mc{T})\m(\mc{T}^{\dg}\lp\mc{S}^{\dg}\kp\mc{R}^{\dg})]^{H}=(\mc{R}\kp\mc{S}\lp\mc{T})\m(\mc{T}^{\dg}\lp\mc{S}^{\dg}\kp\mc{R}^{\dg})$.
Similarly, $(iv)$ is equivalent to
$[(\mc{T}^{\dg}\lp\mc{S}^{\dg}\kp\mc{R}^{\dg})\n(\mc{R}\kp\mc{S}\lp\mc{T})]^{H}=
(\mc{T}^{\dg}\lp\mc{S}^{\dg}\kp\mc{R}^{\dg})\n(\mc{R}\kp\mc{S}\lp\mc{T})$.
\end{proof}

 Note that the triple reverse order law  presented in the above theorem contains a number of conditions. We conclude the article with the note that  simpler characterization of triple reverse order law will be tried in near future.


{\small {\bf Acknowledgments.}\\

The authors acknowledge the support provided by Science and
Engineering Research Board, Department of Science and Technology,
New Delhi, India, under the grant number YSS/2015/000303. }

\section*{References}

\bibliographystyle{amsplain}

\end{document}